\definecolor{webgreen}{rgb}{0,.5,0}
\definecolor{webbrown}{rgb}{.6,0,0}
\begin{document}

\theoremstyle{plain}
\newtheorem{theorem}{Theorem}
\newtheorem{corollary}[theorem]{Corollary}
\newtheorem{lemma}{Lemma}
\newtheorem{example}{Example}
\newtheorem*{remark}{Remark}

\begin{center}
\vskip 1cm
{\LARGE\bf Combinatorial identities with multiple harmonic-like numbers \\ }

\vskip 1cm

{\large
Kunle Adegoke \\
Department of Physics and Engineering Physics \\ Obafemi Awolowo University, 220005 Ile-Ife, Nigeria \\
\href{mailto:adegoke00@gmail.com}{\tt adegoke00@gmail.com}

\vskip 0.2 in

Robert Frontczak \\
Independent Researcher \\ Reutlingen,  Germany \\
\href{mailto:robert.frontczak@web.de}{\tt robert.frontczak@web.de}
}
\end{center}

\vskip .2 in

\begin{abstract}
Multiple harmonic-like numbers are studied using the generating function approach. 
A closed form is stated for binomial sums involving these numbers and two additional parameters. 
Several corollaries and examples are presented which are immediate consequences of the main result. 
Finally, combinatorial identities involving harmonic-like numbers and other prominent sequences like 
hyperharmonic numbers and odd harmonic numbers are offered.
\end{abstract}

\noindent 2010 {\it Mathematics Subject Classification}: 05A19, 11B73, 11B75.

\noindent \emph{Keywords:} Multiple harmonic-like number; Harmonic number; binomial transform.

\bigskip

\section{Preliminaries}

Cheon and El-Mikkawy \cite{Cheon1,Cheon2} defined multiple harmonic-like numbers by
\begin{equation}
H_n(m) = \sum_{1\leq k_1+k_2+\cdots +k_m\leq n} \frac{1}{k_1 k_2\cdots k_m}
\end{equation}
with $H_n(0)=1$ for $n\geq 0$ and $H_0(m)=0$ for $m\geq 1$. They showed that the generating function of $H_n(m)$ equals
\begin{equation}\label{h_like_gf}
H(z) = \sum_{n=0}^\infty H_n(m) z^n = \frac{(-\ln(1-z))^m}{1-z}.
\end{equation}
For $m=1$ these numbers reduce to ordinary harmonic numbers $H_n$ as
\begin{equation}
H_n(1) = \sum_{1\leq k_1\leq n} \frac{1}{k_1} = H_n, \qquad H_0 = 0. 
\end{equation}
For $m=2$ we see that
\begin{align*}
H_n(2) &= \sum_{1\leq k_1+k_2\leq n} \frac{1}{k_1 k_2} = \sum_{j=1}^n \sum_{k_1+k_2=j} \frac{1}{k_1 k_2} \\
&= \sum_{j=1}^n \sum_{k_1=1}^{j-1} \frac{1}{k_1 (j-k_1)} = \sum_{j=1}^n \frac{2}{j} H_{j-1} \\
&= \sum_{j=1}^n \frac{H_{n-j}}{j} = H_n^2 - H_n^{(2)},
\end{align*}
where in the last line a result of Kargin and Can \cite{Kargin} was used and where $H_n^{(2)}$ are the second-order harmonic numbers, i.e.,
\begin{equation*}
H_n^{(2)} = \sum_{j=1}^n \frac{1}{j^2}.
\end{equation*}
The brute force computation of $H_n(3)$ is tedious. The result is
\begin{equation*}
H_n(3) = \sum_{j=1}^n \frac{1}{j} \sum_{l=1}^{n-j} \frac{H_{n-j-l}}{l}.
\end{equation*}

The next lemma is helpful.
\begin{lemma}\label{lem.z55fecm}
For all $n\geq 1$ and $m\geq 0$ we have the identity
\begin{equation}\label{eq.rhzasjc}
H_n(m+1) = \sum_{j=1}^n \frac{H_{n-j}(m)}{j}.
\end{equation}
\end{lemma}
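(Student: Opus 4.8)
The plan is to derive \eqref{eq.rhzasjc} from the generating function \eqref{h_like_gf}. Writing $\mathcal{H}_m(z):=\sum_{n\ge0}H_n(m)z^n=(-\ln(1-z))^m/(1-z)$, the key observation is the factorisation
\[
\mathcal{H}_{m+1}(z)=\bigl(-\ln(1-z)\bigr)\,\mathcal{H}_m(z),
\]
which expresses the generating function at level $m+1$ as $-\ln(1-z)$ times the one at level $m$. Since $-\ln(1-z)=\sum_{j\ge1}z^j/j$, the right-hand side is an ordinary product of two power series, so I would apply the Cauchy product and extract the coefficient of $z^n$:
\[
H_n(m+1)=[z^n]\,\mathcal{H}_{m+1}(z)=\sum_{j=1}^{n}\frac1j\,[z^{\,n-j}]\,\mathcal{H}_m(z)=\sum_{j=1}^{n}\frac{H_{n-j}(m)}{j},
\]
which is the asserted identity.

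The work is essentially bookkeeping. Because $-\ln(1-z)$ has vanishing constant term, the convolution index $j$ starts at $1$; because $H_{n-j}(m)=0$ whenever $0\le n-j<m$ (in particular $H_0(m)=0$ for $m\ge1$), the upper limit $n$ introduces no spurious terms. The one genuinely separate case is $m=0$, where \eqref{h_like_gf} still applies but one may equally verify the identity by hand: the right-hand side collapses to $\sum_{j=1}^n 1/j=H_n=H_n(1)$ since $H_k(0)=1$.

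If quoting \eqref{h_like_gf} is to be avoided, the same identity drops out of the definition directly. Splitting off the last summation index in $H_n(m+1)=\sum_{1\le k_1+\cdots+k_{m+1}\le n}(k_1\cdots k_{m+1})^{-1}$ by putting $k_{m+1}=j$, the remaining indices $k_1,\dots,k_m$ range exactly over $\{\,k_i\ge1,\ k_1+\cdots+k_m\le n-j\,\}$, whose weighted sum is $H_{n-j}(m)$; summing over $j=1,\dots,n$ finishes it. I expect the only delicate point in this route, as before, is keeping the empty-range conventions and the case $m=0$ straight, so on balance the generating-function argument is the one I would write up.
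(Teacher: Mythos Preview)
Your proof is correct and follows essentially the same route as the paper: factor the generating function at level $m+1$ and read off coefficients via the Cauchy product. Your version is in fact more direct---you multiply $\mathcal{H}_m(z)$ by $-\ln(1-z)=\sum_{j\ge1}z^j/j$ and are done, whereas the paper factors as $(1-z)\cdot\frac{-\ln(1-z)}{1-z}\cdot\mathcal{H}_m(z)$, expands through the harmonic-number generating function, and then needs a small telescoping step to recover $1/j$ from $H_j-H_{j-1}$.
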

\begin{proof}
Using \eqref{h_like_gf} we have
\begin{align*}
\sum_{n=0}^\infty H_n(m+1) z^n &= (1-z)\frac{-\ln (1-z)}{1-z} \frac{(-\ln(1-z))^m}{1-z} \\
&= (1-z) \left (\sum_{n=0}^\infty H_n z^n \right) \left( \sum_{n=0}^\infty H_n(m) z^n \right ) \\
&= \sum_{n=0}^\infty \sum_{j=0}^n H_j H_{n-j}(m) z^n - \sum_{n=1}^\infty \sum_{j=0}^{n-1} H_j H_{n-1-j}(m) z^n. 
\end{align*}
Extracting and comparing the coefficients of $z^n$ we obtain for all $n\geq 1$
\begin{align*}
H_n(m+1) &= \sum_{j=0}^{n-1} H_j \left ( H_{n-j}(m) - H_{n-1-j}(m) \right ) \\
&= \sum_{j=1}^n H_{n-j}(m) (H_j - H_{j-1}) \\
&= \sum_{j=1}^n \frac{H_{n-j}(m)}{j}
\end{align*}
as claimed.
\end{proof}

Multiple harmonic-like numbers were studied recently by Chen and Guo in the papers \cite{Chen1} and \cite{Chen2}.
For instance, in \cite{Chen1} several summation formulae involving harmonic-like numbers and other combinatorial numbers were derived.
In the paper \cite{Chen2}, a certain sequence $A_\alpha(n,k)$ was studied and, as a part of this study, additional interesting combinatorial identities involving harmonic-like numbers were presented.

In this paper, we continue the work on harmonic-like numbers applying the generating function approach. 
Our first main result is a closed form for binomial sums involving these numbers and two additional parameters $a,b\in\mathbb{C}$. 
Several corollaries and examples are presented which are immediate consequences of the main result. 
Finally, combinatorial identities involving harmonic-like numbers and other prominent sequences like 
hyperharmonic numbers and odd harmonic numbers are offered. \\

In what follows we will need the definition of the Stirling numbers of the first kind, $s(n,k)$. 
These numbers is defined by the generating function
\begin{equation}\label{def_Stirling}
\sum_{n=k}^\infty s(n,k) \frac{z^n}{n!} = \frac{\ln^k (1+z)}{k!}.
\end{equation}
Some particular values are 
\begin{align*}
s(n,k) &= 0 \qquad\mbox{for}\,\, n<k, \\
s(n,0) &= \begin{cases}
 1, & n=0;  \\
 0, & n\geq 1;  \\
 \end{cases} \\
s(n,1) &= (-1)^{n-1} (n-1)! \\
s(n,2) &= (-1)^n (n-1)! H_{n-1}.
\end{align*}

\section{Binomial sums involving $H_n(m)$}

For $a,b\in\mathbb{C}$ let $S_n(a,b,m)$ be defined by
\begin{equation}
S_n(a,b,m) = \sum_{k=0}^n \binom{n}{k} a^k b^{n-k} H_k(m).
\end{equation}
Then we have the following result.

\begin{theorem}\label{thm1}
For all $n\geq 0$ we have
\begin{equation}\label{main_id1}
S_n(a,b,m) = \sum_{j=0}^m \binom{m}{j} \sum_{k=0}^n H_k(j) (a+b)^k \frac{(m-j)!}{(n-k)!} (-1)^{n-k} b^{n-k} s(n-k,m-j),
\end{equation}
where $s(n,k)$ are the Stirling numbers of the first kind.
\end{theorem}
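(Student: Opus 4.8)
The plan is to compute the generating function (in a new variable $z$) of the sequence $n \mapsto S_n(a,b,m)/n!$ and identify its coefficients with the right-hand side of \eqref{main_id1}. Since $S_n(a,b,m)$ is a binomial convolution of $(b^n)_{n\geq 0}$ and $(a^n H_n(m))_{n \geq 0}$, the appropriate tool is the exponential generating function: if $\sum_n a^n H_n(m) z^n/n!$ and $\sum_n b^n z^n/n! = e^{bz}$ are the EGFs of the two factors, then $\sum_n S_n(a,b,m) z^n/n!$ is their product. However, the paper works with ordinary generating functions via \eqref{h_like_gf}, so the first step is to convert: I would use the Borel-type/Laplace relationship, or more simply observe that from \eqref{h_like_gf} one gets $\sum_n H_n(m) x^n = (-\ln(1-x))^m/(1-x)$, and then extract $a^n H_n(m)$ and feed it through the exponential transform. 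Concretely, the cleanest route is: $\sum_{n\geq 0} S_n(a,b,m)\dfrac{z^n}{n!} = e^{bz}\sum_{n \geq 0} H_n(m)\dfrac{(az)^n}{n!}$, and the latter sum is the exponential generating function of $H_n(m)$ evaluated at $az$.

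The second step is to get a usable closed form for $E_m(w) := \sum_{n\geq 0} H_n(m) w^n/n!$. Here I would exploit the recursion of Lemma~\ref{lem.z55fecm}: the relation $H_n(m+1) = \sum_{j=1}^n H_{n-j}(m)/j$ is an \emph{ordinary} convolution of $H_\bullet(m)$ with the harmonic sequence, which does not pass cleanly to EGFs. So instead I would work directly on the right-hand side of \eqref{main_id1}. Rewrite \eqref{main_id1} after dividing by suitable factorials: the inner sum $\sum_{k=0}^n H_k(j)(a+b)^k \frac{(m-j)!}{(n-k)!}(-1)^{n-k}b^{n-k} s(n-k,m-j)$ is, up to the $(m-j)!$, a binomial-type convolution of $k \mapsto H_k(j)(a+b)^k$ with $n \mapsto (-1)^n b^n s(n,m-j)$. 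By the defining generating function \eqref{def_Stirling} of the Stirling numbers of the first kind, $\sum_{n} (-1)^n b^n s(n,\ell)\, z^n/n! = \dfrac{1}{\ell!}\bigl(\ln(1-bz)\bigr)^\ell \cdot$ (sign bookkeeping: $\ln^\ell(1+(-bz)) = (\ln(1-bz))^\ell$), so this factor has EGF $\frac{1}{(m-j)!}(\ln(1-bz))^{m-j}$. Hence the entire right-hand side of \eqref{main_id1}, transformed to an EGF in $z$, becomes
\[
\sum_{j=0}^m \binom{m}{j}\Bigl(\text{EGF of } k\mapsto H_k(j)(a+b)^k\Bigr)\cdot \bigl(\ln(1-bz)\bigr)^{m-j}.
\]

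The third step is to recognize the EGF of $k \mapsto H_k(j)(a+b)^k$ as $E_j((a+b)z)$ and to use the binomial theorem to collapse the $j$-sum. If I write $u = -\ln(1-(a+b)z)$ and $v = -\ln(1-bz)$, the claim \eqref{main_id1} is equivalent to the EGF identity
\[
e^{bz} E_m(az) \;=\; \sum_{j=0}^m \binom{m}{j} E_j\bigl((a+b)z\bigr)\,(-v)^{m-j}.
\]
This is where the real content sits, and I expect it to be the main obstacle: one needs a multiplicative/binomial structure for $E_m$. The key lemma to establish is that $E_m(w) = \sum_n H_n(m) w^n/n!$ satisfies a clean functional identity — plausibly $E_m(w) = \frac{1}{m!}\,\Phi(w)\,L(w)^m$ for suitable $\Phi, L$, or more likely an addition-type formula $e^{cz}E_m((a+b)z)$-style relation obtained by differentiating \eqref{h_like_gf} or by applying the Laplace transform $F(x) \mapsto \int_0^\infty e^{-t} F(xt)\,dt$ to \eqref{h_like_gf}, which sends $\sum H_n(m)x^n \mapsto \sum H_n(m) x^n/n!$ formally but does not preserve products. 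A more robust attack: prove the target EGF identity by showing both sides satisfy the same linear ODE in $z$ (with respect to the operator coming from $H_n(m+1)-H_n(m)$ type relations) and the same initial conditions, using Lemma~\ref{lem.z55fecm} to handle the increment in $m$ by induction. The induction on $m$ is, I believe, the decisive step: assuming the formula for $m$, one differentiates or uses the Stirling recurrence $s(n,k) = s(n-1,k-1) - (n-1)s(n-1,k)$ together with $H_n(m+1) = H_{n-1}(m+1) + H_n(m)/n$-type consequences of \eqref{eq.rhzasjc} to pass from $m$ to $m+1$, matching coefficients of $z^n$ on both sides. I would organize the final write-up as: (i) reduce \eqref{main_id1} to the EGF identity above; (ii) prove that identity by induction on $m$, base case $m=0$ being $S_n(a,b,0) = (a+b)^n$ which is immediate since $H_k(0)=1$; (iii) in the inductive step, use Lemma~\ref{lem.z55fecm} on the left and the Stirling recurrence on the right. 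The bookkeeping of signs and factorials in step (iii) will be the most error-prone part, but it is routine once the EGF reformulation is in place.
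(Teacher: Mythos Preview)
There is a genuine gap. Your pivotal step is the claim that the inner sum
\[
\sum_{k=0}^n H_k(j)(a+b)^k \frac{(m-j)!}{(n-k)!}(-1)^{n-k}b^{n-k}s(n-k,m-j)
\]
is ``a binomial-type convolution'' of $k\mapsto H_k(j)(a+b)^k$ with $\ell\mapsto (-1)^\ell b^\ell s(\ell,m-j)$, and that therefore its EGF is the product $E_j((a+b)z)\cdot(\ln(1-bz))^{m-j}$. It is not: there is a factor $1/(n-k)!$ but no matching $1/k!$, so this is an \emph{ordinary} Cauchy convolution of $c_k:=H_k(j)(a+b)^k$ with $d_\ell:=(m-j)!\,(-1)^\ell b^\ell s(\ell,m-j)/\ell!$. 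Consequently the EGF of the right-hand side is \emph{not} $\sum_{j}\binom{m}{j}E_j((a+b)z)(\ln(1-bz))^{m-j}$; that identity is already false for $m=1$ at the $z^2$ coefficient (one finds $\tfrac{3a^2}{4}+ab$ on the left versus $\tfrac{3a^2}{4}+\tfrac{ab}{2}-\tfrac{3b^2}{4}$ on the right). Since the whole induction plan in your step (iii) is organized around proving this false EGF identity, the argument cannot close as written.

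The fix is exactly the observation you set aside: stay with ordinary generating functions. The binomial sum $S_n(a,b,m)$ has the well-known OGF transform (Euler/binomial transform in OGF form, as in the references \cite{Boyadzhiev2,Prodinger})
\[
\sum_{n\ge 0} S_n(a,b,m)z^n=\frac{1}{1-bz}\,H\!\left(\frac{az}{1-bz}\right)
=\frac{\bigl(-\ln(1-(a+b)z)+\ln(1-bz)\bigr)^m}{1-(a+b)z},
\]
using \eqref{h_like_gf}. Expanding the $m$th power by the binomial theorem gives
\[
\sum_{j=0}^m\binom{m}{j}\frac{(-\ln(1-(a+b)z))^j}{1-(a+b)z}\,\ln^{m-j}(1-bz),
\]
which is precisely $\sum_j\binom{m}{j}\bigl(\text{OGF of }H_k(j)(a+b)^k\bigr)\cdot\bigl(\text{OGF of }d_\ell\bigr)$ by \eqref{h_like_gf} and \eqref{def_Stirling}. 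A single ordinary Cauchy product then yields \eqref{main_id1}. This is the paper's proof: no induction on $m$, no need for a closed form of $E_m$, and no appeal to Lemma~\ref{lem.z55fecm}.
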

\begin{proof}
Let $S(z)$ denote the generating function of $S_n(a,b,m)$. Then (see \cite{Boyadzhiev2,Prodinger})
\begin{align*}
S(z) &= \sum_{n=0}^\infty S_n(a,b,m) z^m \\
&= \frac{1}{1-bz} H\left (\frac{az}{1-bz}\right ) \\
&= \frac{1}{1-(a+b)z}\left ( - \ln\left (\frac{1-(a+b)z}{1-bz}\right )\right )^m \\
&= (-1)^m \frac{(\ln(1-(a+b)z) - \ln(1-bz))^m}{1-(a+b)z} \\
&= \sum_{j=0}^m \binom{m}{j} \frac{(-\ln(1-(a+b)z))^j}{1-(a+b)z} \ln^{m-j}(1-bz).
\end{align*}
The result follows from \eqref{h_like_gf} and \eqref{def_Stirling} in combination with the Cauchy product. 
\end{proof}

\begin{remark}
When $m=0$ then using $H_n(0)=1,n\geq 0$, 
\begin{equation*}
S_n(a,b,0) = \sum_{k=0}^n \binom{n}{k} a^k b^{n-k} = (a+b)^n.
\end{equation*}
When $m=1$ then we get
\begin{align}
S_n(a,b,1) &= \sum_{k=0}^n \binom{n}{k} a^k b^{n-k} H_k \nonumber \\
&= \sum_{k=0}^n \frac{(-1)^{n-k}}{(n-k)!} (a+b)^k b^{n-k} s(n-k,1) + \sum_{k=0}^n \frac{(-1)^{n-k}}{(n-k)!} H_k (a+b)^k b^{n-k} s(n-k,0) \nonumber \\
&= H_n (a+b)^n - \sum_{k=0}^{n-1} (a+b)^k b^{n-k} \frac{1}{n-k},
\end{align}
which reproduces Boyadzhiev's main result (Proposition 6) from \cite{Boyadzhiev1}.
\end{remark}

\begin{corollary}
For $n,m\geq 0$ we have
\begin{equation}\label{cor_id1}
\sum_{k=0}^n \binom{n}{k} (-1)^k H_k(m) = (-1)^n \frac{m!}{n!} s(n,m)
\end{equation}
and
\begin{equation}\label{cor_id2}
\sum_{k=m}^n \binom{n}{k} \frac{s(k,m)}{k!} = \frac{1}{m!} H_n(m).
\end{equation}
\end{corollary}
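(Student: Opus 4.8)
The plan is to derive both identities as immediate specializations/consequences of Theorem~\ref{thm1}. For \eqref{cor_id1} I would set $a=-1$ and $b=1$ in \eqref{main_id1}, so that $a+b=0$. On the right-hand side the factor $(a+b)^k=0^k$ then vanishes for every $k\geq 1$, leaving only the $k=0$ contributions; and when $k=0$ the factor $H_0(j)$ equals $1$ for $j=0$ and $0$ for $j\geq 1$, so the single surviving summand is the one with $(j,k)=(0,0)$. That summand equals $\binom{m}{0}H_0(0)\,(a+b)^0\,\frac{m!}{n!}(-1)^n b^n s(n,m)=(-1)^n\frac{m!}{n!}s(n,m)$, which is exactly \eqref{cor_id1}. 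The only point needing care is that the empty products are read with the convention $0^0=1$, which is consistent with $S_0(a,b,m)=H_0(m)$.

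For \eqref{cor_id2} I would invoke the (essentially self-inverse) binomial inversion: if $g_n=\sum_{k=0}^n\binom{n}{k}(-1)^k f_k$ for all $n\geq 0$, then $f_n=\sum_{k=0}^n\binom{n}{k}(-1)^k g_k$. Applying this with $f_k=H_k(m)$ and, by \eqref{cor_id1}, $g_k=(-1)^k\frac{m!}{k!}s(k,m)$, we get $H_n(m)=\sum_{k=0}^n\binom{n}{k}(-1)^k(-1)^k\frac{m!}{k!}s(k,m)=m!\sum_{k=0}^n\binom{n}{k}\frac{s(k,m)}{k!}$. Since $s(k,m)=0$ for $k<m$, the summation index may be shifted to start at $k=m$, and dividing by $m!$ yields \eqref{cor_id2}.

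Alternatively, \eqref{cor_id2} can be proved in the same generating-function spirit as Theorem~\ref{thm1}: writing $c_n$ for the left-hand side and using the standard generating-function form of the binomial transform, $\sum_{n\geq 0}c_nz^n=\frac{1}{1-z}\sum_{k\geq m}\frac{s(k,m)}{k!}\bigl(\tfrac{z}{1-z}\bigr)^k$, which by \eqref{def_Stirling} equals $\frac{1}{m!}\cdot\frac{1}{1-z}\ln^m\!\bigl(1+\tfrac{z}{1-z}\bigr)=\frac{1}{m!}\cdot\frac{(-\ln(1-z))^m}{1-z}$; comparing with \eqref{h_like_gf} gives $c_n=\frac{1}{m!}H_n(m)$.

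I do not expect a real obstacle here. The two spots requiring a little attention are the degenerate substitution $a+b=0$ in the first identity (tracking which terms of the double sum survive, together with the $0^0$ convention), and, in the inversion step for the second identity, justifying that the lower limit of summation can be moved from $0$ to $m$ because the Stirling numbers vanish below the diagonal.
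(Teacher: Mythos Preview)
Your proposal is correct and follows essentially the same approach as the paper: the paper proves \eqref{cor_id1} by setting $a=-1$, $b=1$ in \eqref{main_id1} and simplifying, and obtains \eqref{cor_id2} as the inverse binomial transform of \eqref{cor_id1}. Your write-up supplies the details of the simplification (tracking which summands survive when $a+b=0$) and of the inversion, and your alternative generating-function derivation of \eqref{cor_id2} is a valid extra route not given in the paper.
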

\begin{proof}
The first result follows immediately by setting $a=-1$ and $b=1$ in \eqref{main_id1} and simplifying.
The second identity is the inverse binomial transform of the first (for information on the binomial transform consult \cite{Boyadzhiev2}). 
\end{proof}
\begin{remark}
The identity \eqref{cor_id1} was discovered first by Chen and Guo (see \cite[Corollary 8]{Chen2}). 
The second identity also appears in that paper (with a typo, see the proof of Proposition 13).
\end{remark}

\begin{corollary}
For $n,m\geq 0$ we have
\begin{equation}\label{cor_id3}
\sum_{k=0}^n \binom{n}{k} H_k(m) = \sum_{j=0}^m \binom{m}{j} \sum_{k=0}^n H_k(j) (-1)^{n-k} 2^k \frac{(m-j)!}{(n-k)!} s(n-k,m-j).
\end{equation}
In particular, we recover the classical identity \cite{Boyadzhiev1}
\begin{equation}
\sum_{k=0}^n \binom{n}{k} H_k = 2^n \left ( H_n - \sum_{k=1}^n \frac{1}{2^k k} \right ).
\end{equation}
\end{corollary}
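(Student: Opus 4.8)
The plan is to specialize Theorem~\ref{thm1} at $a = b = 1$. With this choice the left-hand side of \eqref{main_id1} becomes $S_n(1,1,m) = \sum_{k=0}^n \binom{n}{k} H_k(m)$, which is exactly the left-hand side of \eqref{cor_id3}; on the right-hand side we have $a+b = 2$ and $b = 1$, so every factor $b^{n-k}$ equals $1$ and $(a+b)^k$ becomes $2^k$. Substituting these values directly into \eqref{main_id1} produces \eqref{cor_id3} with no further manipulation, so there is essentially no obstacle for the first assertion.

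For the classical identity I would set $m = 1$ in \eqref{cor_id3} and split the outer sum into the two terms $j = 0$ and $j = 1$. In the $j = 1$ term the relevant Stirling number is $s(n-k, 0)$, which vanishes unless $k = n$; since $H_n(1) = H_n$ and $(m-j)! = 0! = 1$, this term contributes exactly $2^n H_n$. In the $j = 0$ term we use $H_k(0) = 1$ together with $s(n-k, 1) = (-1)^{n-k-1}(n-k-1)!$, which forces $k \le n-1$; after simplifying $(n-k-1)!/(n-k)! = 1/(n-k)$ and collecting the signs, this term equals $-\sum_{k=0}^{n-1} \frac{2^k}{n-k}$.

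Finally I would re-index the last sum via $k \mapsto n-k$, turning it into $-\sum_{k=1}^n \frac{2^{n-k}}{k} = -2^n \sum_{k=1}^n \frac{1}{2^k k}$. Adding the two contributions then gives $2^n H_n - 2^n \sum_{k=1}^n \frac{1}{2^k k} = 2^n \left( H_n - \sum_{k=1}^n \frac{1}{2^k k}\right)$, as claimed. The only point requiring care is the bookkeeping of the vanishing ranges of the Stirling numbers $s(n-k, m-j)$ --- in particular $s(0,0) = 1$ while $s(0,1) = 0$ --- so that the summation bounds in each of the two terms are correctly identified; the remaining steps are routine algebra.
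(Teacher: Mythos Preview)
Your proposal is correct and follows essentially the same approach as the paper, which simply says ``Set $a=b$ in \eqref{main_id1} and simplify.'' Your choice $a=b=1$ is the natural specialization (the general $a=b$ case only introduces a common factor $a^n$ on both sides), and your detailed unpacking of the $m=1$ case is exactly the ``simplification'' the paper leaves implicit.
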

\begin{proof}
Set $a=b$ in \eqref{main_id1} and simplify.
\end{proof}

\begin{corollary}\label{main_cor}
For $n\geq 0$ we have
\begin{equation}\label{cor_id4}
S_n(a,b,2) = H_n(2) (a+b)^n + 2 \sum_{k=1}^{n} (a+b)^{n-k} b^{k} \frac{H_{k-1}-H_{n-k}}{k}. 
\end{equation}
\end{corollary}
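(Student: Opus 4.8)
The plan is to specialize Theorem~\ref{thm1} to $m=2$ and evaluate the three contributions coming from $j=0,1,2$ in \eqref{main_id1}, using the explicit values of the Stirling numbers $s(n,0)$, $s(n,1)$, $s(n,2)$ recorded in the Preliminaries.

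First I would isolate the $j=2$ term. Here $\binom{2}{2}=1$ and $(m-j)!=0!=1$, and the inner sum carries the factor $s(n-k,0)$, which vanishes unless $n-k=0$. Hence only $k=n$ survives, and since $H_n(2)$ is the coefficient $H_n(j)$ with $j=2$, this term equals $H_n(2)(a+b)^n$, giving the leading term of \eqref{cor_id4}.

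Next, for $j=1$ I would use $H_k(1)=H_k$ and $s(n-k,1)=(-1)^{n-k-1}(n-k-1)!$, valid for $n-k\geq 1$ (the index $k=n$ drops out). Cancelling the factorials $(n-k-1)!/(n-k)!=1/(n-k)$ and combining the two sign factors $(-1)^{n-k}(-1)^{n-k-1}=-1$, together with $\binom{2}{1}=1!=1$ so that the prefactor is $2$, the $j=1$ contribution collapses to $-2\sum_{k=0}^{n-1}(a+b)^k b^{n-k}\,H_k/(n-k)$. In the same way, for $j=0$ I would use $H_k(0)=1$ and $s(n-k,2)=(-1)^{n-k}(n-k-1)!\,H_{n-k-1}$, valid for $n-k\geq 2$; now $\binom{2}{0}=1$ and $(m-j)!=2!=2$, the sign factors multiply to $+1$, and the $j=0$ contribution collapses to $2\sum_{k=0}^{n-2}(a+b)^k b^{n-k}\,H_{n-k-1}/(n-k)$.

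Finally I would substitute $k\mapsto n-k$ in both remaining sums. Since $H_0=0$, the lower limits can both be taken to be $k=1$ and the upper limit $k=n$, so the two sums can be merged over the common index; collecting them yields $2\sum_{k=1}^{n}(a+b)^{n-k}b^{k}\,(H_{k-1}-H_{n-k})/k$, which is exactly the remaining part of the right-hand side of \eqref{cor_id4}. The only delicate point is the bookkeeping of signs and factorials in the $j=0$ and $j=1$ terms and the alignment of the summation ranges after reindexing; once those are handled the identity follows at once, so I do not anticipate any substantive obstacle.
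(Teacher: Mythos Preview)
Your argument is correct and follows exactly the paper's approach: specialize \eqref{main_id1} to $m=2$ and simplify the three terms $j=0,1,2$ using the listed values of $s(n,0),s(n,1),s(n,2)$. The only blemish is the phrase ``$\binom{2}{1}=1!=1$'', which is garbled (presumably you meant $\binom{2}{1}=2$ and $(m-j)!=1!=1$); the resulting prefactor $2$ and all subsequent steps are right.
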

\begin{proof}
Set $m=2$ in \eqref{main_id1} and simplify.
\end{proof}

Some consequences of Corollary \ref{main_cor} will be stated as examples.
\begin{example}
We have
\begin{equation}
\sum_{k=0}^n \binom{n}{k} H_k(2) = 2^n \left ( H_n(2) + 2 \sum_{k=1}^n \frac{H_{k-1}-H_{n-k}}{2^{k} k} \right ).
\end{equation}
\end{example}

\begin{example}
From
\begin{equation}
\sum_{k=0}^n \binom{n}{k} (-1)^k H_k(2) = \frac{2}{n} H_{n-1}
\end{equation}
we deduce that
\begin{equation}
\sum_{k=0}^n \binom{n}{k} (-1)^k H_k^{(2)} = - \frac{H_n}{n},
\end{equation}
where we used that (see for instance \cite{Adegoke})
\begin{equation*}
\sum_{k=0}^n \binom{n}{k} (-1)^k H_k^{2} = \frac{H_n}{n} - \frac{2}{n^2}.
\end{equation*}
The inverse binomial relation also yields
\begin{equation}
\sum_{k=1}^n \binom{n}{k} (-1)^{k+1} \frac{H_k}{k} = H_n^{(2)}.
\end{equation}
\end{example}

\begin{example}
We have
\begin{equation}
\sum_{k=0}^n \binom{n}{k} 2^k (-1)^{n-k} H_k(2) = H_n(2) + 2 \sum_{k=1}^n (-1)^k \frac{H_{k-1}-H_{n-k}}{k}.
\end{equation}
\end{example}

\begin{example}
We have
\begin{equation}
\sum_{k=0}^n \binom{n}{k} 2^k H_k(2) = 3^n \left ( H_n(2) + 2 \sum_{k=1}^n \frac{H_{k-1}-H_{n-k}}{3^k k} \right ).
\end{equation}
\end{example}

\begin{example}
Let $F_n$ and $L_n$ be the Fibonacci and Lucas numbers, respectively (see Koshy \cite{Koshy} or Vajda \cite{Vajda}). Then we have
\begin{equation}
\sum_{k=0}^n \binom{n}{k} F_k H_k(2) = H_n(2) F_{2n} + 2 \sum_{k=1}^n F_{2(n-k)} \frac{H_{k-1}-H_{n-k}}{k},
\end{equation}
\begin{equation}
\sum_{k=0}^n \binom{n}{k} L_k H_k(2) = H_n(2) L_{2n} + 2 \sum_{k=1}^n L_{2(n-k)} \frac{H_{k-1}-H_{n-k}}{k},
\end{equation}
and
\begin{equation}
\sum_{k=0}^n \binom{n}{k} (-1)^{k+1} F_k H_k(2) = H_n(2) F_{n} + 2 \sum_{k=1}^n F_{n-k} \frac{H_{k-1}-H_{n-k}}{k},
\end{equation}
\begin{equation}
\sum_{k=0}^n \binom{n}{k} (-1)^{k} L_k H_k(2) = H_n(2) L_{n} + 2 \sum_{k=1}^n L_{n-k} \frac{H_{k-1}-H_{n-k}}{k}.
\end{equation}
\end{example}

\begin{corollary}\label{main_cor2}
For $n\geq 0$ we have
\begin{align}\label{cor_id5}
S_n(a,b,3) &= H_n(3) (a+b)^n \nonumber \\
&\quad -3 \sum_{k=1}^{n} (a+b)^{n-k} b^{k} \frac{H_{k-1}^2 - H_{k-1}^{(2)} - 2H_{k-1} H_{n-k} + H_{n-k}^2 - H_{n-k}^{(2)}}{k}. 
\end{align}
\end{corollary}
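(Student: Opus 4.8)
The plan is to mirror the proof of Corollary~\ref{main_cor}: substitute $m=3$ into the master formula \eqref{main_id1} and collapse the resulting double sum. With $m=3$ the outer index runs over $j\in\{0,1,2,3\}$, so there are four blocks
\[
\binom{3}{j}\sum_{k=0}^{n}H_k(j)\,(a+b)^k\,\frac{(3-j)!}{(n-k)!}\,(-1)^{n-k}b^{n-k}\,s(n-k,3-j),
\]
and besides $s(\cdot,0)$, $s(\cdot,1)$, $s(\cdot,2)$ recorded in the Preliminaries I need the value
\[
s(N,3)=\frac{(-1)^{N-1}(N-1)!}{2}\bigl(H_{N-1}^2-H_{N-1}^{(2)}\bigr),
\]
which follows from the recurrence $s(N+1,k)=s(N,k-1)-N\,s(N,k)$ (with $k=3$, using the given $s(\cdot,2)$) together with the elementary telescoping $\bigl(H_N^2-H_N^{(2)}\bigr)-\bigl(H_{N-1}^2-H_{N-1}^{(2)}\bigr)=2H_{N-1}/N$. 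Equivalently, and perhaps more in the spirit of this paper, one can skip Stirling numbers entirely by going back to the generating function $S(z)$ displayed in the proof of Theorem~\ref{thm1}, setting $m=3$, and expanding each factor $\ln^{\,3-j}(1-bz)=(-1)^{\,3-j}(-\ln(1-bz))^{\,3-j}$ via
\[
(-\ln(1-w))^{\ell}=\ell\sum_{n\geq1}\frac{H_{n-1}(\ell-1)}{n}\,w^{n},
\]
which is immediate upon differentiating \eqref{h_like_gf}; the Cauchy product then yields the same coefficients.

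Next I would evaluate the four blocks. The block $j=3$ contributes only $H_n(3)(a+b)^n$, since $s(n-k,0)$ vanishes unless $k=n$. For $j\in\{0,1,2\}$ the factor $(n-k-1)!$ hidden in $s(n-k,3-j)$ cancels against $(n-k)!$ to leave $1/(n-k)$, the remaining constants (the $(3-j)!$, the sign $(-1)^{n-k}$, and the $\tfrac12$ in $s(\cdot,3)$) bundle into a harmless numerical coefficient, and after replacing $H_k(2)=H_k^2-H_k^{(2)}$, $H_k(1)=H_k$, $H_k(0)=1$ and re-indexing $k\mapsto n-k$ each block becomes a sum over $1\le k\le n$ with common weight $(a+b)^{n-k}b^k/k$. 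Explicitly, $j=2$ gives $-3\sum_{k=1}^{n}\frac{(a+b)^{n-k}b^k}{k}\bigl(H_{n-k}^2-H_{n-k}^{(2)}\bigr)$, $j=1$ gives $+6\sum_{k=1}^{n}\frac{(a+b)^{n-k}b^k}{k}H_{k-1}H_{n-k}$, and $j=0$ gives $-3\sum_{k=1}^{n}\frac{(a+b)^{n-k}b^k}{k}\bigl(H_{k-1}^2-H_{k-1}^{(2)}\bigr)$; the apparent mismatch of summation ranges is harmless because the extra terms at $k=1$ (and $k=2$ for the $j=0$ block) vanish thanks to $H_0=0$ and $H_0(2)=H_1(2)=0$.

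Finally I would add the four pieces. Pulling the common factor $-3(a+b)^{n-k}b^k/k$ out of the three surviving sums leaves precisely
\[
\bigl(H_{k-1}^2-H_{k-1}^{(2)}\bigr)-2H_{k-1}H_{n-k}+\bigl(H_{n-k}^2-H_{n-k}^{(2)}\bigr),
\]
which is exactly the numerator in \eqref{cor_id5}, completing the identification. The only genuinely substantive ingredient is the closed form for $s(N,3)$ (equivalently, the expansion of $(-\ln(1-w))^{3}$); everything after that is bookkeeping — tracking signs, the factorial cancellation, and the substitution $k\mapsto n-k$ — so the main risk here is clerical rather than conceptual.
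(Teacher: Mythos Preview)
Your proposal is correct and follows exactly the paper's approach: set $m=3$ in \eqref{main_id1} and simplify using the closed form $s(N,3)=\tfrac{1}{2}(-1)^{N-1}(N-1)!\bigl(H_{N-1}^2-H_{N-1}^{(2)}\bigr)$. You supply more detail than the paper (a derivation of $s(N,3)$ and the explicit block-by-block bookkeeping), but the route is the same.
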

\begin{proof}
Set $m=3$ in \eqref{main_id1} and simplify using
\begin{equation*}
s(n,3) = \frac{1}{2} (-1)^{n-1} (n-1)! \left ( H_{n-1}^2 - H_{n-1}^{(2)} \right ).
\end{equation*}
\end{proof}

\section{Combinatorial identities from clever telescoping}

\begin{lemma}\label{lem.jjbwp3m}
If $(a_n)_{n\geq 1}$ is a sequence, then
\begin{equation*}
\sum_{k = 1}^n H_k \left( {a_{k + 1} - a_k} \right) = H_n a_{n + 1} - \sum_{k = 1}^n \frac{a_k}{k}.
\end{equation*}
\end{lemma}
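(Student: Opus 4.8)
The plan is to prove the identity by Abel summation (summation by parts), which is the natural tool whenever one sees a sum of the form $\sum H_k(a_{k+1}-a_k)$. The key fact is that the harmonic numbers have the simple forward difference $H_{k+1}-H_k = \tfrac{1}{k+1}$, equivalently $H_k - H_{k-1} = \tfrac1k$ for $k\ge 1$ (with $H_0=0$).

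First I would write out the telescoping explicitly. Set $\Delta a_k = a_{k+1}-a_k$ and compute
\begin{align*}
\sum_{k=1}^n H_k \,\Delta a_k &= \sum_{k=1}^n H_k a_{k+1} - \sum_{k=1}^n H_k a_k.
\end{align*}
In the first sum shift the index $k\mapsto k-1$ to get $\sum_{k=2}^{n+1} H_{k-1} a_k$, so that
\begin{align*}
\sum_{k=1}^n H_k \,\Delta a_k &= H_n a_{n+1} + \sum_{k=2}^{n} H_{k-1} a_k - \sum_{k=1}^n H_k a_k \\
&= H_n a_{n+1} - H_1 a_1 + \sum_{k=2}^{n} (H_{k-1}-H_k) a_k \\
&= H_n a_{n+1} - \sum_{k=1}^n \frac{a_k}{k},
\end{align*}
where in the last step we used $H_1 a_1 = \tfrac{a_1}{1}$ to absorb the $k=1$ term and $H_k - H_{k-1} = \tfrac1k$ for $2\le k\le n$. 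This is exactly the claimed identity.

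There is essentially no obstacle here; the only point requiring a little care is the bookkeeping of the boundary terms after the index shift (making sure the $k=1$ and $k=n+1$ terms are handled correctly), and checking that the formula holds in the degenerate case $n=0$, where both sides are empty sums equal to $0$. Alternatively, one could give a one-line induction on $n$: assuming the statement for $n-1$, add the term $H_n(a_{n+1}-a_n)$ to both sides and use $H_n = H_{n-1}+\tfrac1n$ to rewrite $-H_{n-1}a_n = -H_n a_n + \tfrac{a_n}{n}$, which collapses to the statement for $n$. I would present the Abel-summation version since it is the cleaner of the two and makes the mechanism transparent.
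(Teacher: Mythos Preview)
Your proof is correct and is essentially the same approach as the paper's: the paper simply states that the identity is an immediate consequence of the recurrence $H_k-H_{k-1}=\tfrac1k$, which is precisely the Abel-summation/telescoping computation you wrote out in full. Your handling of the boundary terms is fine.
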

\begin{proof}
This is an immediate consequence of the recurrence relation of the harmonic numbers.
\end{proof}

Lemma~\ref{lem.jjbwp3m} is broadly applicable and allows to instantly state a myriad of interesting results.
We give three examples as a warmup. Setting $a_n = H_{n-1}$ gives the known result
\begin{equation*}
\sum_{k = 1}^n \frac{{H_{k - 1} }}{k} = \frac{1}{2}\left( {H_n^2  - H_n^{(2)} } \right).
\end{equation*}
The choice $a_n = n$ yields the well-known harmonic number summation formula
\begin{equation*}
\sum_{k = 1}^n H_k = (n + 1)H_n - n.
\end{equation*}
Let $a_n = F_{n + 1}$ be the sequence of (shifted) Fibonacci numbers. Then we get the following Fibonacci-harmonic number identity:
\begin{equation*}
\sum_{k = 1}^n H_k F_k = H_n F_{n + 2} - \sum_{k = 1}^n \frac{{F_{k + 1} }}{k}.
\end{equation*}

\begin{theorem}
If $m$ and $n$ are non-negative integers, then
\begin{equation}\label{eq.o107dby}
\sum_{k = 1}^n H_k \sum_{j = m}^k {\binom{{k - 1}}{{j - 1}}\frac{{s(j,m)}}{{j!}}} = \frac{1}{{m!}}H_n (m)H_n  
- \frac{1}{{m!}}\sum_{k = 1}^n \frac{{H_{k - 1}(m)}}{k}.
\end{equation}
\end{theorem}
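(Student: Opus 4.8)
The plan is to apply the telescoping identity of Lemma~\ref{lem.jjbwp3m} to the cleverly chosen sequence $a_k = \frac{1}{m!} H_{k-1}(m)$. With this choice the right-hand side of Lemma~\ref{lem.jjbwp3m} becomes
\[
H_n a_{n+1} - \sum_{k=1}^n \frac{a_k}{k} = \frac{1}{m!} H_n(m) H_n - \frac{1}{m!} \sum_{k=1}^n \frac{H_{k-1}(m)}{k},
\]
which is exactly the right-hand side of \eqref{eq.o107dby}. So it remains only to check that the forward difference $a_{k+1}-a_k$ equals the inner sum $\sum_{j=m}^k \binom{k-1}{j-1} \frac{s(j,m)}{j!}$ appearing on the left.

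For that I would invoke the companion identity \eqref{cor_id2}, that is, $\frac{1}{m!} H_k(m) = \sum_{j=m}^k \binom{k}{j} \frac{s(j,m)}{j!}$, valid for every $k \geq 0$ (both sides vanishing when $k<m$). Then
\[
a_{k+1} - a_k = \frac{1}{m!}(H_k(m) - H_{k-1}(m)) = \sum_{j=m}^k \binom{k}{j}\frac{s(j,m)}{j!} - \sum_{j=m}^{k-1}\binom{k-1}{j}\frac{s(j,m)}{j!}.
\]
Since the $j=k$ term of the first sum carries the factor $\binom{k-1}{k} = 0$, both sums may be taken over $m \leq j \leq k$, and Pascal's rule $\binom{k}{j} - \binom{k-1}{j} = \binom{k-1}{j-1}$ collapses the combined sum to $\sum_{j=m}^k \binom{k-1}{j-1}\frac{s(j,m)}{j!}$, as needed.

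Plugging this expression for $a_{k+1}-a_k$ into the left-hand side of Lemma~\ref{lem.jjbwp3m} turns it into the left-hand side of \eqref{eq.o107dby}, while the opening paragraph has already matched the two right-hand sides; hence \eqref{eq.o107dby} follows. There is essentially no obstacle in the argument: the only care needed is the bookkeeping of the summation ranges (using $s(j,m)=0$ for $j<m$ and $\binom{k-1}{k}=0$), together with the trivial degenerate case $m=0$, where both sides reduce to $H_n - H_n = 0$. The one genuinely inventive ingredient is the guess $a_k = \frac{1}{m!}H_{k-1}(m)$, forced by wanting the boundary term $H_n a_{n+1}$ of the telescoping lemma to reproduce $\frac{1}{m!}H_n(m) H_n$.
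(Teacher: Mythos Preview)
Your argument is correct and is essentially the paper's own proof: apply Lemma~\ref{lem.jjbwp3m} with $a_k$ proportional to $H_{k-1}(m)$ and identify the forward difference via \eqref{cor_id2} and Pascal's rule (this is the paper's equation \eqref{eq.e04wpeg}). One tiny slip of wording: in the sentence ``the $j=k$ term of the first sum carries the factor $\binom{k-1}{k}=0$'' you mean the \emph{second} sum, since it is $\sum_{j=m}^{k-1}\binom{k-1}{j}\cdots$ that is being harmlessly extended to $j=k$; the mathematics is unaffected.
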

\begin{proof}
Use Lemma~\ref{lem.jjbwp3m} with $a_k=H_{k - 1}\left(m\right)$ while noting from~\eqref{cor_id2} that
\begin{equation}\label{eq.e04wpeg}
H_k(m) - H_{k - 1}(m) = m!\sum_{j = m}^k \binom{{k - 1}}{{j - 1}}\frac{s(j,m)}{j!}.
\end{equation}
\end{proof}

Setting $m=1$ in~\eqref{eq.o107dby} gives the known result
\begin{equation*}
\sum_{k = 1}^n \frac{{H_k}}{k} = \frac{1}{2}\left( H_n^2  + H_n^{(2)} \right),
\end{equation*}
while $m=2$ gives
\begin{equation*}
2\sum_{k = 1}^n H_k \sum_{j = 1}^k {( - 1)^j \binom{{k - 1}}{{j - 1}}\frac{{H_{j - 1} }}{j}} 
= H_n^3 - H_n^{(2)} H_n - \sum_{k = 1}^n \frac{{H_{k - 1}^2 - H_{k - 1}^{(2)} }}{k}.
\end{equation*}

\begin{theorem}
If $m$ and $n$ are positive integers, then
\begin{equation}
\sum_{k = 1}^n H_k \sum_{j = m}^{n - k + 1} \binom{{n - k}}{{j - 1}} \frac{s(j,m)}{j!} = \frac{1}{m!} H_{n + 1}(m+1).
\end{equation}
In particular,
\begin{equation}
\sum_{k = 1}^n \frac{{H_k }}{{n - k + 1}} = H_{n + 1}^2 - H_{n + 1}^{(2)}
\end{equation}
and
\begin{equation}
\sum_{k = 1}^n H_k \sum_{j = 2}^{n - k + 1} \binom{{n - k}}{{j - 1}}\frac{{\left( { - 1} \right)^j }}{j}H_{j - 1} 
= \frac{1}{2} \sum_{k = 1}^{n + 1} \frac{1}{k}\sum_{j = 1}^{n + 1 - k} \frac{{H_{n - k - j + 1} }}{j}.
\end{equation}
\end{theorem}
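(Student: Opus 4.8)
The plan is to first collapse the inner Stirling-number sum into a difference of two harmonic-like numbers, and then to evaluate the resulting single sum by the telescoping Lemma~\ref{lem.jjbwp3m} followed by Lemma~\ref{lem.z55fecm}. For the first step I would start from~\eqref{eq.e04wpeg}, namely $H_k(m) - H_{k-1}(m) = m!\sum_{j=m}^{k}\binom{k-1}{j-1}\frac{s(j,m)}{j!}$, and simply replace $k$ by $n-k+1$. Since then $k-1$ becomes $n-k$, this yields
\begin{equation*}
\sum_{j=m}^{n-k+1}\binom{n-k}{j-1}\frac{s(j,m)}{j!} = \frac{1}{m!}\bigl(H_{n-k+1}(m) - H_{n-k}(m)\bigr),
\end{equation*}
which is exactly the inner sum on the left-hand side of the theorem. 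Hence the whole left-hand side equals $\frac{1}{m!}\sum_{k=1}^{n} H_k\bigl(H_{n-k+1}(m)-H_{n-k}(m)\bigr)$.

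Next I would evaluate $T_n := \sum_{k=1}^{n} H_k\bigl(H_{n-k+1}(m)-H_{n-k}(m)\bigr)$ by applying Lemma~\ref{lem.jjbwp3m} to the sequence $a_k = -H_{n-k+1}(m)$, for which $a_{k+1}-a_k = H_{n-k+1}(m)-H_{n-k}(m)$. The lemma gives $T_n = H_n a_{n+1} - \sum_{k=1}^{n}\frac{a_k}{k} = -H_n H_0(m) + \sum_{k=1}^{n}\frac{H_{n+1-k}(m)}{k}$. Because $m\geq 1$ we have $H_0(m)=0$, so $T_n = \sum_{k=1}^{n}\frac{H_{n+1-k}(m)}{k}$; and since the missing $k=n+1$ term equals $H_0(m)/(n+1)=0$, we may extend the sum to $k=n+1$, giving $T_n = \sum_{k=1}^{n+1}\frac{H_{n+1-k}(m)}{k}$. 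By Lemma~\ref{lem.z55fecm} applied with $n$ replaced by $n+1$, this is precisely $H_{n+1}(m+1)$. Dividing by $m!$ finishes the main identity.

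For the two displayed specializations I would substitute the known Stirling values. With $m=1$, $\frac{s(j,1)}{j!} = \frac{(-1)^{j-1}}{j}$, and the classical evaluation $\sum_{i=0}^{N}\binom{N}{i}\frac{(-1)^{i}}{i+1} = \frac{1}{N+1}$ (with $N=n-k$) collapses the inner sum to $\frac{1}{n-k+1}$, while on the right $H_{n+1}(2) = H_{n+1}^2 - H_{n+1}^{(2)}$ by the computation in the Preliminaries. With $m=2$, $\frac{s(j,2)}{j!} = \frac{(-1)^{j}H_{j-1}}{j}$, and on the right $\frac{1}{2}H_{n+1}(3)$ is rewritten using the brute-force formula for $H_{n}(3)$ from the Preliminaries (with $n$ replaced by $n+1$), which gives the stated double sum after relabeling.

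I expect the only delicate point to be the index bookkeeping in the shift $k\mapsto n-k+1$ and the handling of the boundary terms in Lemma~\ref{lem.jjbwp3m}: one must check carefully that $a_{n+1} = -H_0(m) = 0$ and that the telescoped sum can be enlarged to upper limit $n+1$ at no cost, so that Lemma~\ref{lem.z55fecm} applies verbatim. The positivity hypothesis $m\geq 1$ is used exactly here (to kill $H_0(m)$), and $n\geq 1$ keeps all the ranges nonempty; apart from this, every step is a direct substitution.
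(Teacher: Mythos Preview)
Your proof is correct and follows essentially the same route as the paper: the paper's one-line proof applies Lemma~\ref{lem.jjbwp3m} with $a_k = H_{n-k+1}(m)$ and invokes~\eqref{eq.rhzasjc} and~\eqref{eq.e04wpeg}, which is exactly what you do (your choice $a_k = -H_{n-k+1}(m)$ differs only by an overall sign). Your explicit handling of the boundary terms $H_0(m)=0$ and the extension of the sum to $k=n+1$ is more detailed than the paper's terse proof but matches it in substance.
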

\begin{proof}
Set $a_k  = H_{n - k + 1} \left( m \right)$ in Lemma~\ref{lem.jjbwp3m} and use~\eqref{eq.rhzasjc} and~\eqref{eq.e04wpeg}.
\end{proof}

\begin{lemma}\label{lem.yugnf7k}
If $(a_n)_{n\geq 0}$ is a sequence, then
\begin{equation}
\sum_{k = 1}^n \frac{a_k - a_{k - 1}}{k} = \sum_{k = 1}^n \frac{{a_k}}{{k(k + 1)}} - a_0 + \frac{a_n}{n + 1}.
\end{equation}
\end{lemma}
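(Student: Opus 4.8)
The plan is to mimic the telescoping trick already used for Lemma~\ref{lem.jjbwp3m}: separate the difference $a_k-a_{k-1}$, shift the summation index in the part containing $a_{k-1}$, and then recombine using partial fractions. Concretely, I would first write
\begin{equation*}
\sum_{k=1}^n \frac{a_k-a_{k-1}}{k} = \sum_{k=1}^n \frac{a_k}{k} - \sum_{k=1}^n \frac{a_{k-1}}{k} = \sum_{k=1}^n \frac{a_k}{k} - \sum_{k=0}^{n-1} \frac{a_k}{k+1},
\end{equation*}
the second equality coming from the substitution $k\mapsto k+1$ in the sum carrying $a_{k-1}$.

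Next I would strip off the two boundary terms, namely $k=n$ from the first sum and $k=0$ from the second, which leaves the common index range $1\le k\le n-1$ and the telescoping coefficient $\tfrac{1}{k}-\tfrac{1}{k+1}=\tfrac{1}{k(k+1)}$:
\begin{equation*}
\sum_{k=1}^n \frac{a_k-a_{k-1}}{k} = \frac{a_n}{n} - a_0 + \sum_{k=1}^{n-1} \frac{a_k}{k(k+1)}.
\end{equation*}
Finally, to bring this into the stated form I would absorb a $k=n$ term back into the last sum via the splitting $\tfrac{a_n}{n}=\tfrac{a_n}{n+1}+\tfrac{a_n}{n(n+1)}$, which turns the right-hand side into $\sum_{k=1}^n \tfrac{a_k}{k(k+1)} - a_0 + \tfrac{a_n}{n+1}$, exactly as claimed.

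I do not anticipate a genuine obstacle here; the only points requiring care are the single index shift and the bookkeeping of the two boundary contributions ($a_0$ at the bottom, $a_n$ at the top), after which the identity drops out of the partial-fraction decomposition of $\tfrac{1}{k(k+1)}$. As a sanity check — or indeed as an alternative proof — one may argue by induction on $n$: for $n=1$ both sides equal $a_1-a_0$, and passing from $n-1$ to $n$ changes each side by $\tfrac{a_n-a_{n-1}}{n}$, since $\tfrac{a_n}{n(n+1)}+\tfrac{a_n}{n+1}-\tfrac{a_{n-1}}{n}=\tfrac{a_n-a_{n-1}}{n}$.
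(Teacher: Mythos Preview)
Your argument is correct and is essentially the paper's approach spelled out in full: the paper's entire proof is the one line ``This is a consequence of $\tfrac{1}{k}-\tfrac{1}{k+1}=\tfrac{1}{k(k+1)}$,'' and your index shift plus boundary bookkeeping is precisely how that consequence is realized. The added induction check is a nice sanity step but not needed.
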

\begin{proof}
This is a consequence of
\begin{equation*}
\frac{1}{k} -\frac{1}{k + 1} = \frac{1}{k(k + 1)}.
\end{equation*}
\end{proof}

Again, many results can be derived from Lemma~\ref{lem.yugnf7k}. For instance, setting $a_k=H_k+p$ ($p$ a non-negative integer) gives
\begin{equation}\label{Har_example}
\sum_{k = 1}^n \frac{{H_{k+p}}}{k(k + 1)} = 
\begin{cases}
  H_n^{(2)} - \frac{H_n}{n + 1},& p=0;  \\ 
	\\
  \frac{H_n + H_p - H_{n+p}}{p} + H_p - \frac{H_{n+p}}{n+1}, & p\geq 1;
 \end{cases}
\end{equation}
where we used the result
\begin{equation*}
\sum_{k = 1}^n \frac{1}{k(k + p)} = 
\begin{cases}
  H_n^{(2)}, & p=0;  \\ 
	\\
  \frac{H_n + H_p - H_{n+p}}{p}, & p\geq 1.
 \end{cases}
\end{equation*}
The result \eqref{Har_example} is most likely not new.

\begin{theorem}
If $m$ is a non-negative integer and $n$ is a positive integer, then
\begin{equation}
\sum_{k = 1}^n {\frac{{H_{n - k} \left( m \right)}}{{k\left( {k + 1} \right)}}}  = H_n \left( m \right) + H_n \left( {m + 1} \right) - H_{n + 1} \left( {m + 1} \right).
\end{equation}
In particular,
\begin{equation}
\sum_{k = 1}^n {\frac{{H_{n - k} }}{{k\left( {k + 1} \right)}}}  = H_n  + H_n^2  - H_n^{(2)}  - H_{n + 1}^2  + H_{n + 1}^{(2)}
\end{equation}
and
\begin{equation}
\sum_{k = 1}^n {\frac{{H_{n - k}^2  - H_{n - k}^{(2)} }}{{k(k + 1)}}}  = H_n^2  - H_n^{(2)}  + \sum_{k = 1}^n {\frac{1}{k}\sum_{j = 1}^{n - k} {\frac{{H_{n - k - j} }}{j}} }  - \sum_{k = 1}^{n + 1} {\frac{1}{k}\sum_{j = 1}^{n + 1 - k} {\frac{{H_{n + 1 - k - j} }}{j}} }.
\end{equation}
\end{theorem}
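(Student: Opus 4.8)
The plan is to invoke Lemma~\ref{lem.yugnf7k} with the choice $a_k = H_{n-k}(m)$. With this choice $a_0 = H_n(m)$ and $a_n = H_0(m)$, and the sum on the left of the theorem is precisely the quantity $\sum_{k=1}^n a_k/(k(k+1))$ occurring in that lemma. Rearranging, the problem reduces to evaluating
\[
\sum_{k=1}^n \frac{a_k - a_{k-1}}{k} = \sum_{k=1}^n \frac{H_{n-k}(m) - H_{n-k+1}(m)}{k},
\]
after which the lemma reads $\sum_{k=1}^n \frac{H_{n-k}(m)}{k(k+1)} = \sum_{k=1}^n \frac{H_{n-k}(m)-H_{n-k+1}(m)}{k} + H_n(m) - \frac{H_0(m)}{n+1}$.

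The key step is to recognise the remaining sum as a difference of two copies of the recurrence~\eqref{eq.rhzasjc}. Applying that recurrence at $n+1$ and at $n$ and subtracting, the terms indexed $j = 1,\dots,n$ collapse to $\sum_{j=1}^n \frac{H_{n+1-j}(m) - H_{n-j}(m)}{j}$, which is exactly (after renaming $j$ to $k$) the sum we must compute, while the lone leftover term of the first series is $\frac{H_0(m)}{n+1}$. Hence
\[
\sum_{k=1}^n \frac{H_{n-k+1}(m) - H_{n-k}(m)}{k} = H_{n+1}(m+1) - H_n(m+1) - \frac{H_0(m)}{n+1}.
\]
Substituting this back and observing that the two occurrences of $\frac{H_0(m)}{n+1}$ cancel — uniformly in $m$, be it $m = 0$ with $H_0(0) = 1$ or $m \geq 1$ with $H_0(m) = 0$ — leaves exactly $H_n(m) + H_n(m+1) - H_{n+1}(m+1)$, the asserted formula.

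The two special cases are then immediate substitutions. For $m = 1$ one uses $H_N(1) = H_N$ together with the evaluation $H_N(2) = H_N^2 - H_N^{(2)}$ from Section~1, at $N = n$ and $N = n+1$. For $m = 2$ one combines $H_N(2) = H_N^2 - H_N^{(2)}$ with the triple-sum expression $H_N(3) = \sum_{k=1}^N \frac{1}{k} \sum_{l=1}^{N-k} \frac{H_{N-k-l}}{l}$ recorded in Section~1, again at $N = n$ and $N = n+1$, which produces the stated nested-sum identity after the obvious cancellation of the $l$-index bookkeeping.

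I do not anticipate a genuine obstacle: the argument is a short telescoping computation glued to~\eqref{eq.rhzasjc}. The only point deserving attention is the careful handling of the boundary term $H_0(m)$, but since it enters with opposite signs in the two places it appears, it drops out and the final formula holds uniformly for all $m \geq 0$.
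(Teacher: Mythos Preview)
Your proof is correct and follows essentially the same approach as the paper: apply Lemma~\ref{lem.yugnf7k} with $a_k = H_{n-k}(m)$ and then use the recurrence~\eqref{eq.rhzasjc} (Lemma~\ref{lem.z55fecm}) to evaluate the resulting difference sum. Your treatment is actually more explicit than the paper's one-line proof, in particular in tracking the boundary term $H_0(m)/(n+1)$ and noting its cancellation.
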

\begin{proof}
Use $a_k  = H_{n - k} (m)$ in Lemma~\ref{lem.yugnf7k} and invoke Lemma~\ref{lem.z55fecm}.
\end{proof}

\begin{lemma}\label{lem.u8veeoy}
Let $(a_n)_{n\geq 0}$ be a sequence. If $r$ is a complex number and $n$ is a non-negative number, then
\begin{equation}
\sum_{k = 0}^n (- 1)^k \binom{{r - 1}}{k} (a_{k + 1} - a_k) = (- 1)^n \binom{{r - 1}}{n}a_{n + 1} - \sum_{k = 0}^n (- 1)^k \binom{r}{k} a_k.
\end{equation}
\end{lemma}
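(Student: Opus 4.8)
The plan is to prove this by a discrete summation-by-parts argument, in the same spirit as Lemma~\ref{lem.jjbwp3m}, using only the Pascal-type recurrence for binomial coefficients with a complex upper index,
\[
\binom{r}{k} = \binom{r-1}{k} + \binom{r-1}{k-1},
\]
valid for every $r\in\mathbb{C}$ and every integer $k\geq 1$ (with the convention $\binom{r-1}{-1}=0$). The identity in question simply rewrites a weighted sum of forward differences $a_{k+1}-a_k$ as a single boundary term plus a weighted sum of the $a_k$ themselves, with the weight shifted from $\binom{r-1}{k}$ to $\binom{r}{k}$.

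First I would split the left-hand side as
\[
\sum_{k=0}^n (-1)^k \binom{r-1}{k} a_{k+1} \;-\; \sum_{k=0}^n (-1)^k \binom{r-1}{k} a_k,
\]
and reindex the first sum by $k\mapsto k-1$, turning it into $-\sum_{k=1}^{n+1}(-1)^k\binom{r-1}{k-1}a_k$. Adding this to the second sum and isolating the extreme indices $k=0$ (coming only from the second sum) and $k=n+1$ (coming only from the reindexed first sum), the overlapping range $1\leq k\leq n$ carries the combined coefficient $-(-1)^k\bigl(\binom{r-1}{k-1}+\binom{r-1}{k}\bigr)a_k$, which by Pascal's recurrence collapses to $-(-1)^k\binom{r}{k}a_k$.

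It then remains to tidy up the boundary contributions: the $k=n+1$ term is $(-1)^{n}\binom{r-1}{n}a_{n+1}$, which is exactly the desired boundary term, and the $k=0$ term is $-a_0 = -(-1)^0\binom{r}{0}a_0$ since $\binom{r}{0}=1$, so it merges with the remaining sum to extend it down to $k=0$. This produces precisely the right-hand side. There is no genuine obstacle here; the only point calling for a little care is the boundary bookkeeping at $k=0$ and $k=n+1$ after the index shift, together with the (routine) observation that the generalized binomial coefficients obey Pascal's rule, so that the cancellation goes through for arbitrary complex $r$.
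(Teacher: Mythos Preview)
Your argument is correct: the summation-by-parts computation with the Pascal recurrence goes through exactly as you describe, and the boundary bookkeeping at $k=0$ and $k=n+1$ is handled cleanly. The paper itself does not actually prove this lemma---it simply cites it as a variation on an identity of Koll\'ar---so your self-contained Abel-summation proof is more informative than what the paper provides, while being entirely in the spirit of the other telescoping lemmas in the section.
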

\begin{proof}
This is a variation on an identity of Koll\'ar~\cite[Lemma 1]{rkollar}.
\end{proof}

\begin{theorem}
If $r$ is a complex number and $m$ and $n$ are positive integers, then
\begin{equation}
\begin{split}
\sum_{k = 0}^n {( - 1)^k \binom{{r - 1}}{k}\sum_{j = m}^{k +1} {\binom{{k}}{{j - 1}}\frac{{s(j,m)}}{{j!}}} }  &= ( - 1)^n \binom{{r - 1}}{n}\frac{1}{{m!}}H_{n + 1} \left( m \right)\\
&\qquad - \frac{1}{{m!}}\sum_{k = 0}^n {( - 1)^k \binom{{r}}{k}H_k \left( m \right)} .
\end{split}
\end{equation}
In particular,
\begin{equation}
\sum_{k = 0}^n {\frac{{\left( { - 1} \right)^k }}{{k + 1}}\binom{{r - 1}}{k}} 
= \left( { - 1} \right)^n \binom{{r - 1}}{n}H_{n + 1} - \sum_{k = 0}^n {\left( { - 1} \right)^k \binom{{r}}{k}H_k } 
\end{equation}
and
\begin{equation}
\begin{split}
\sum_{k = 0}^n {\left( { - 1} \right)^k \binom{{r - 1}}{k}\sum_{j = 2}^{k + 1} {\left( { - 1} \right)^j \binom{{k}}{{j - 1}}\frac{{H_{j - 1} }}{j}} } &= \left( { - 1} \right)^n \binom{{r - 1}}{n}\frac{1}{2}\left( {H_{n + 1}^2  - H_n^{(2)} } \right)\\
&\qquad - \frac{1}{2}\sum_{k = 0}^n {\left( { - 1} \right)^k \binom{{r}}{k}\left( {H_k^2  - H_k^{(2)} } \right)} .
\end{split}
\end{equation}
\end{theorem}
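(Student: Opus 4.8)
The plan is to apply Lemma~\ref{lem.u8veeoy} directly, with a careful choice of the sequence $(a_k)$ built from the multiple harmonic-like numbers, and then convert each of the three resulting pieces into the shape appearing in the statement. Concretely, I would set $a_k = H_k(m)/m!$ in Lemma~\ref{lem.u8veeoy}. The right-hand side of that lemma immediately yields the two terms on the right of the claimed identity: $(-1)^n\binom{r-1}{n}\frac{1}{m!}H_{n+1}(m)$ and $-\frac{1}{m!}\sum_{k=0}^n(-1)^k\binom{r}{k}H_k(m)$. So the only real work is to rewrite the left-hand side $\sum_{k=0}^n(-1)^k\binom{r-1}{k}\,(a_{k+1}-a_k)$, i.e. $\frac{1}{m!}\sum_{k=0}^n(-1)^k\binom{r-1}{k}\bigl(H_{k+1}(m)-H_k(m)\bigr)$, into $\sum_{k=0}^n(-1)^k\binom{r-1}{k}\sum_{j=m}^{k+1}\binom{k}{j-1}\frac{s(j,m)}{j!}$.

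The key step is therefore the difference formula for $H_k(m)$. Equation~\eqref{eq.e04wpeg}, established from~\eqref{cor_id2}, states
\[
H_k(m) - H_{k-1}(m) = m!\sum_{j=m}^{k}\binom{k-1}{j-1}\frac{s(j,m)}{j!}.
\]
Replacing $k$ by $k+1$ gives $H_{k+1}(m)-H_k(m) = m!\sum_{j=m}^{k+1}\binom{k}{j-1}\frac{s(j,m)}{j!}$, which is exactly the inner sum needed (the prefactor $m!$ cancels the $1/m!$ coming from $a_k$). Substituting this into the left side of Lemma~\ref{lem.u8veeoy} produces the claimed double sum, completing the general identity.

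For the two particular cases, I would simply specialize $m$. For $m=1$, use $s(j,1)=(-1)^{j-1}(j-1)!$, so $\sum_{j=1}^{k+1}\binom{k}{j-1}\frac{s(j,1)}{j!} = \sum_{j=1}^{k+1}\binom{k}{j-1}\frac{(-1)^{j-1}}{j} = \frac{1}{k+1}$ (a standard binomial-harmonic evaluation, or reindex and use $\sum_{i=0}^{k}\binom{k}{i}\frac{(-1)^i}{i+1}=\frac{1}{k+1}$), together with $H_{n+1}(1)=H_{n+1}$; this yields the first displayed special case. For $m=2$, use $s(j,2)=(-1)^j(j-1)!H_{j-1}$ so the inner sum becomes $\sum_{j=2}^{k+1}(-1)^j\binom{k}{j-1}\frac{H_{j-1}}{j}$ up to the factor $\tfrac12$, and use $H_{n+1}(2)=H_{n+1}^2-H_{n+1}^{(2)}$ and $H_k(2)=H_k^2-H_k^{(2)}$; a small index bookkeeping check is needed to see that the paper writes $H_n^{(2)}$ rather than $H_{n+1}^{(2)}$ inside the first term on the right (these differ by $1/(n+1)^2$, which can be absorbed since $\binom{r-1}{n}$ already multiplies it — one should verify the stated form is the intended normalization). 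The main obstacle, such as it is, is this last verification of the exact index conventions in the specializations; the general identity itself is a direct substitution into Lemma~\ref{lem.u8veeoy} via~\eqref{eq.e04wpeg} and presents no difficulty.
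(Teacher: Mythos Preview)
Your approach is exactly the paper's: it simply says ``Use $a_k=H_k(m)$ in Lemma~\ref{lem.u8veeoy}'' (your rescaling by $1/m!$ is immaterial), relying on~\eqref{eq.e04wpeg} for the difference $H_{k+1}(m)-H_k(m)$. One remark on your last paragraph: the discrepancy you noticed in the $m=2$ special case (the paper writes $H_n^{(2)}$ where the direct substitution gives $H_{n+1}^{(2)}$) cannot be ``absorbed'' --- it is almost certainly a typo in the stated formula, so there is nothing further to verify.
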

\begin{proof}
Use $a_k=H_k\left(m\right)$ in Lemma~\ref{lem.u8veeoy}.
\end{proof}

\section{More identities involving $H_n(m)$ and other sequences}

Hyperharmonic numbers $H_{n,p}$ (or $h_n^{(p)}$), $p\geq 1,$ are another generalization of harmonic numbers (\cite{Boyadzhiev3,Coppo,Dil}). 
They are defined by
\begin{equation}\label{eq.uxf1e3m}
H_{n,p} = \sum_{i=1}^n H_{i,p-1} \qquad \mbox{with} \qquad  H_{n,0} = \frac{1}{n},\, H_{0,p} = 0,\,H_{n,1} = H_n.
\end{equation}
They can be written in the compact form
\begin{equation}\label{eq.sw1cyzy}
H_{n,p+1} = \binom {n+p}{n} (H_{n+p} -H_{p} ), \qquad p=0,1,2,\ldots, 
\end{equation}

\begin{theorem}\label{hyphar}
For all $n,p,m\geq 0$ we have
\begin{equation}\label{eq.dk6gvdp}
\sum_{k=0}^n \binom{k+p}{k} H_{n - k}(m) (H_{k+p}-H_p) = \sum_{k=0}^n \binom{k+p}{k} H_{n-k}(m+1). 
\end{equation}
In particular,
\begin{equation}
\sum_{k=0}^n \binom{k+p}{k} (H_{k+p}-H_p) = \sum_{k=0}^n \binom{k+p}{k} H_{n-k}
\end{equation}
and
\begin{equation}
\sum_{k=0}^n \binom{k+p}{k} H_{n - k} (H_{k+p}-H_p) = \sum_{k=0}^n \binom{k+p}{k} \left ( H_{n-k}^2 - H_{n-k}^{(2)} \right ). 
\end{equation}
\end{theorem}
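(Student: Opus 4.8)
The plan is to prove \eqref{eq.dk6gvdp} by recognizing both sides as the coefficient of $z^n$ in one and the same generating function, in the spirit of Lemma~\ref{lem.z55fecm} and the proof of Theorem~\ref{thm1}.

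First I would assemble the three generating functions involved. From the binomial series, $\sum_{n\geq 0}\binom{n+p}{n}z^n=(1-z)^{-(p+1)}$; from~\eqref{h_like_gf}, $\sum_{n\geq 0}H_n(m)z^n=(-\ln(1-z))^m/(1-z)$; and, using~\eqref{eq.sw1cyzy}, the sequence $\binom{k+p}{k}(H_{k+p}-H_p)=H_{k,p+1}$ has generating function $-\ln(1-z)/(1-z)^{p+1}$. This last fact follows either directly from~\eqref{eq.sw1cyzy} or, more cheaply, by iterating the recurrence $H_{k,p+1}=\sum_{i=1}^k H_{i,p}$ from~\eqref{eq.uxf1e3m}, which multiplies $\sum_k H_k z^k=-\ln(1-z)/(1-z)$ by $1/(1-z)$ a total of $p$ times.

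Next I would read off the two sides of~\eqref{eq.dk6gvdp} as Cauchy products. The left-hand side equals $\sum_{k=0}^n H_{k,p+1}\,H_{n-k}(m)$, i.e. the coefficient of $z^n$ in
\[
\frac{-\ln(1-z)}{(1-z)^{p+1}}\cdot\frac{(-\ln(1-z))^m}{1-z}=\frac{(-\ln(1-z))^{m+1}}{(1-z)^{p+2}},
\]
while the right-hand side equals $\sum_{k=0}^n\binom{k+p}{k}H_{n-k}(m+1)$, i.e. the coefficient of $z^n$ in
\[
\frac{1}{(1-z)^{p+1}}\cdot\frac{(-\ln(1-z))^{m+1}}{1-z}=\frac{(-\ln(1-z))^{m+1}}{(1-z)^{p+2}}.
\]
The two generating functions coincide, so the coefficients of $z^n$ agree, which is precisely~\eqref{eq.dk6gvdp}. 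For the stated specializations I would then put $m=0$, using $H_n(0)=1$ and $H_n(1)=H_n$, and $m=1$, using $H_n(1)=H_n$ together with the formula $H_n(2)=H_n^2-H_n^{(2)}$ recorded in Section~1.

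The argument is short, so there is no serious obstacle; the only point requiring a little care is the bookkeeping that rewrites $\binom{k+p}{k}(H_{k+p}-H_p)$ as $H_{k,p+1}$ and thereby exhibits its generating function as $-\ln(1-z)/(1-z)^{p+1}$. Once that is secured, the identity is immediate from the uniqueness of power-series coefficients, and one need not even expand the common generating function explicitly.
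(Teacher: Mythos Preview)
Your argument is correct and is essentially the paper's own proof: both compute the Cauchy product of the generating function of $H_{k,p+1}$ with that of $H_n(m)$, obtain $(-\ln(1-z))^{m+1}/(1-z)^{p+2}$, and then refactor this as the product of $(1-z)^{-(p+1)}$ with the generating function of $H_n(m+1)$. Your write-up adds a brief justification of the hyperharmonic generating function $-\ln(1-z)/(1-z)^{p+1}$, which the paper simply assumes, but otherwise the approaches coincide.
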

\begin{proof}
Let $B(z)$ be the generating function of $H_{n,p+1}$. We can calculate
\begin{align*}
H(z)\cdot B(z) &= \left (\sum_{n=0}^\infty H_n(m) z^n \right )\left (\sum_{n=0}^\infty H_{n,p+1} z^n \right ) \\
&= \sum_{n=0}^\infty \sum_{k=0}^n H_k(m) H_{n-k,p+1} z^n \\
&= \frac{(-\ln (1-z))^m}{1-z}\frac{-\ln (1-z)}{(1-z)^{p+1}} \\
&= \frac{1}{(1-z)^{p+1}} \frac{(-\ln (1-z))^{m+1}}{1-z} \\
&= \left (\sum_{n=0}^\infty \binom{n+p}{n} z^n \right )\left (\sum_{n=0}^\infty H_n(m+1) z^n \right ) \\
&= \sum_{n=0}^\infty \sum_{k=0}^n \binom{k+p}{k} H_{n-k}(m+1) z^n
\end{align*}
and the statement follows.
\end{proof}

The next theorem contains an identity involving $H_n(m)$ and odd harmonic numbers $O_n$, the latter being defined by
\begin{equation*}
O_n = \sum_{k = 1}^n \frac{1}{2k-1},\quad O_0=0.
\end{equation*}
Obvious relations between harmonic numbers $H_n$ and odd harmonic numbers $O_n$ are given by
\begin{equation*}
H_{2n} = \frac{1}{2} H_n + O_n \qquad \text{and} \qquad H_{2n - 1} = \frac{1}{2}H_{n - 1} + O_n.
\end{equation*}
Additional relations are contained in the next lemma. 

\begin{lemma}\label{lem.czxfdu7}
If $n$ is an integer, then
\begin{gather}
H_{n - 1/2} - H_{ - 1/2} = 2O_n,\label{eq.plh634k} \\
H_{n - 1/2} - H_{1/2} = 2\left( {O_n  - 1} \right),\label{eq.hgplrbd}\\
H_{n + 1/2} - H_{ - 1/2} = 2O_{n + 1},\label{eq.ivi1ex5} \\
H_{n + 1/2} - H_{1/2} = 2\left( {O_{n + 1}  - 1} \right),\label{eq.u6ng5d6}\\
H_{n + 1/2} - H_{n - 1/2} = \frac{2}{{2n + 1}},\\
H_{n - 1/2} - H_{-3/2} = 2\left( {O_n  - 1} \right)\label{eq.pobmr6h},\\
H_{n + 1/2} - H_{-3/2} = 2\left( {O_{n + 1}  - 1} \right).
\end{gather}
\end{lemma}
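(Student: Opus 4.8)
The plan is to interpret the harmonic numbers at non-integer arguments through the classical extension $H_x = \psi(x+1) + \gamma$, where $\psi$ is the digamma function; every argument occurring in the lemma, namely $-3/2$, $-1/2$, $1/2$ and $n\pm 1/2$, is a regular point of $\psi$, since its only singularities are at the non-positive integers, and with this convention the defining recurrence $H_x - H_{x-1} = 1/x$ continues to hold. I will treat the integer $n$ as non-negative, with the usual convention $O_0 = 0$ (for other integers the telescoping sums below are empty or run in the opposite direction and one adjusts signs accordingly).

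First I would establish the single master identity \eqref{eq.plh634k} by telescoping. Writing $H_{n-1/2} - H_{-1/2} = \sum_{j=1}^{n}\left(H_{j-1/2} - H_{j-3/2}\right)$ and applying the recurrence with $x = j - 1/2$ turns each summand into $\frac{1}{j-1/2} = \frac{2}{2j-1}$, so the sum equals $2\sum_{j=1}^{n}\frac{1}{2j-1} = 2O_n$. The same argument with the telescoping range $j = 0,1,\dots,n$ and $x = j + 1/2$ gives
\[
H_{n+1/2} - H_{-1/2} = \sum_{j=0}^{n}\frac{1}{j+1/2} = \sum_{j=0}^{n}\frac{2}{2j+1} = 2\sum_{i=1}^{n+1}\frac{1}{2i-1} = 2O_{n+1},
\]
which is \eqref{eq.ivi1ex5}.

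Next I would reduce all remaining identities to these two by shifting the base point with the recurrence. One application gives $H_{1/2} = H_{-1/2} + 1/(1/2) = H_{-1/2} + 2$, and one application in the other direction gives $H_{-3/2} = H_{-1/2} - 1/(-1/2) = H_{-1/2} + 2$; in particular $H_{-3/2} = H_{1/2}$. Hence subtracting $H_{1/2} - H_{-1/2} = 2$ from \eqref{eq.plh634k} and from \eqref{eq.ivi1ex5} yields \eqref{eq.hgplrbd} and \eqref{eq.u6ng5d6}, respectively, and then \eqref{eq.pobmr6h} and the last identity are literally \eqref{eq.hgplrbd} and \eqref{eq.u6ng5d6} again because $H_{-3/2} = H_{1/2}$. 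Finally, the fifth identity $H_{n+1/2} - H_{n-1/2} = \frac{2}{2n+1}$ is either the bare recurrence at $x = n + 1/2$ or the difference of \eqref{eq.ivi1ex5} and \eqref{eq.plh634k}, namely $2O_{n+1} - 2O_n = \frac{2}{2n+1}$.

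I do not expect a genuine obstacle here: once the extension $H_x = \psi(x+1) + \gamma$ and its recurrence are in place, the whole lemma is bookkeeping. The only point that requires a little care is keeping the three base values $H_{-3/2}$, $H_{-1/2}$, $H_{1/2}$ straight — in particular noticing the coincidence $H_{-3/2} = H_{1/2}$ — and choosing the endpoints of the two telescoping ranges correctly so that one sum lands on $O_n$ and the other on $O_{n+1}$.
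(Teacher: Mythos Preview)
Your argument is correct: the telescoping of $H_{j-1/2}-H_{j-3/2}=\frac{2}{2j-1}$ yields \eqref{eq.plh634k} and \eqref{eq.ivi1ex5}, the base-point computations $H_{1/2}=H_{-1/2}+2=H_{-3/2}$ are right, and the remaining five identities follow as you describe. The paper states this lemma without proof, so there is no argument to compare against; your approach via the digamma extension $H_x=\psi(x+1)+\gamma$ and the recurrence $H_x-H_{x-1}=1/x$ is exactly the standard one and is presumably what the authors had in mind.
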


\begin{lemma}
If $p$ and $r$ are non-negative integers, then
\begin{equation}\label{eq.m2jjbl5}
H_{r,p + 1/2} = \frac{1}{{2^{2r - 1} }}\binom{{2p}}{p}^{- 1} \binom{{2(r + p)}}{{r + p}}\binom{{r + p}}{r}\left( {O_{r + p} - O_p } \right).
\end{equation}
\end{lemma}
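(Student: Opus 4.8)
The plan is to start from the hyperharmonic compact form \eqref{eq.sw1cyzy}, which in the half-integer parameter regime reads $H_{r,p+1/2} = \binom{r+p-1/2}{r}\,(H_{r+p-1/2} - H_{p-1/2})$. Here the generalized binomial coefficient $\binom{r+p-1/2}{r}$ should be read as $\frac{1}{r!}\prod_{i=1}^{r}\bigl(p-1/2+i\bigr)$, i.e.\ via the Pochhammer product $\frac{(p+1/2)_r}{r!}$. So the first step is to justify that \eqref{eq.sw1cyzy} continues to hold with a half-integer in the ``$p$'' slot; this is just the statement that the recurrence \eqref{eq.uxf1e3m} defining hyperharmonic numbers by repeated summation has a closed form valid for any complex order, which follows from the standard generating-function identity $\sum_{n\geq 0} H_{n,q+1} z^n = \frac{-\ln(1-z)}{(1-z)^{q+1}}$ together with the binomial series — exactly the computation already used in the proof of Theorem~\ref{hyphar}, now with $q = p-1/2$.

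Next I would handle the two ingredients separately. For the harmonic-number difference, apply \eqref{eq.pobmr6h} of Lemma~\ref{lem.czxfdu7} in the form $H_{r+p-1/2} - H_{-3/2} = 2(O_{r+p}-1)$ and again $H_{p-1/2} - H_{-3/2} = 2(O_p - 1)$; subtracting gives $H_{r+p-1/2} - H_{p-1/2} = 2(O_{r+p} - O_p)$, which is precisely the factor $O_{r+p}-O_p$ on the right of \eqref{eq.m2jjbl5} up to the factor $2$. For the binomial coefficient, the task is to show
\[
\frac{(p+1/2)_r}{r!} \;=\; \frac{1}{2^{2r-1}}\binom{2p}{p}^{-1}\binom{2(r+p)}{r+p}\binom{r+p}{r}\cdot\frac{1}{2},
\]
i.e.\ $\binom{r+p-1/2}{r} = \frac{1}{2^{2r}}\binom{2p}{p}^{-1}\binom{2(r+p)}{r+p}\binom{r+p}{r}$. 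This is a pure manipulation of factorials: use the standard duplication evaluation $\binom{n-1/2}{n} = \frac{1}{4^n}\binom{2n}{n}$ (equivalently $(1/2)_n = \frac{(2n)!}{4^n n!}$) applied with $n = r+p$ and with $n = p$, write $(p+1/2)_r = (1/2)_{r+p}/(1/2)_p$, and combine with $\binom{r+p}{r} = \frac{(r+p)!}{r!\,p!}$. Collecting powers of $2$ (noting $4^{r+p}/4^{p} = 2^{2r}$) and the $\binom{2p}{p}$, $\binom{2(r+p)}{r+p}$ terms yields exactly the claimed coefficient; the two halves ($\tfrac12$ from $O_{r+p}-O_p = \tfrac12(H_{r+p-1/2}-H_{p-1/2})$ and the $2^{2r}$ versus $2^{2r-1}$) are reconciled in this bookkeeping step.

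The only real obstacle is getting the power-of-two and factorial bookkeeping exactly right, since an off-by-one in the exponent of $2$ or a misplaced $\binom{2p}{p}^{\pm1}$ would break the identity; everything else is a direct substitution. One mild subtlety worth flagging: formula \eqref{eq.sw1cyzy} as displayed has the binomial $\binom{n+p}{n}$ with a \emph{nonnegative integer} $n=r$ in the upper-minus-lower slot, so the continuation to half-integer order is legitimate, but one should confirm the edge cases $r=0$ (both sides equal $0$, since $O_0=0$) and $p=0$ (right side becomes $\frac{1}{2^{2r-1}}\binom{2r}{r}O_r$, which one checks against $H_{r-1/2}-H_{-1/2}=2O_r$ from \eqref{eq.plh634k}) to make sure no degenerate normalization has been lost.
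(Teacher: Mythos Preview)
Your proposal is correct and follows essentially the same route as the paper: substitute the half-integer order into the compact form~\eqref{eq.sw1cyzy}, convert the harmonic difference $H_{r+p-1/2}-H_{p-1/2}$ to $2(O_{r+p}-O_p)$ via Lemma~\ref{lem.czxfdu7}, and reduce the generalized binomial $\binom{r+p-1/2}{r}$ to the product of central binomials stated as~\eqref{eq.j80dqx2} in the paper. The only cosmetic differences are that the paper invokes~\eqref{eq.hgplrbd} rather than~\eqref{eq.pobmr6h} for the harmonic step (either works after subtraction), and that you spell out the Pochhammer/duplication verification of~\eqref{eq.j80dqx2} and the edge cases, which the paper simply asserts.
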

\begin{proof}
Identity~\eqref{eq.hgplrbd} gives
\begin{equation}\label{eq.tb96ki2}
H_{r + p - 1/2} - H_{r - 1/2} = 2\left( {O_{r + p} - O_p } \right).
\end{equation}
Using this and 
\begin{equation}\label{eq.j80dqx2}
\binom{{r + p - 1/2}}{r} = \frac{1}{{2^{2r} }}\binom{{2p}}{p}^{ - 1} \binom{{2(r + p)}}{{r + p}}\binom{{r + p}}{r}
\end{equation}
in~\eqref{eq.sw1cyzy} gives~\eqref{eq.m2jjbl5}.
\end{proof}

\begin{theorem}
If $n$ and $p$ are non-negative integers, then
\begin{equation}\label{eq.suzj3to}
\begin{split}
&\sum_{k = 1}^n {\frac{1}{{2^{2k} }}\binom{{2\left( {k + p} \right)}}{{k + p}}\binom{{k + p}}{k}\left( {O_{k + p}  - O_p } \right)} \\
&= \frac{1}{{2^{2n + 1} }}\,\frac{{p + 1}}{{2p + 1}}\binom{{2\left( {n + p + 1} \right)}}{{n + p + 1}}\binom{{n + p + 1}}{n}\left( {O_{n + p + 1}  - O_{p + 1} } \right).
\end{split}
\end{equation}
\end{theorem}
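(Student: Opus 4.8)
The plan is to recognize both sides of~\eqref{eq.suzj3to} in terms of the half-integer hyperharmonic numbers supplied by the preceding lemma, after which the identity collapses to the ``summation raises the order'' property of hyperharmonic numbers evaluated at a half-integer parameter.

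First I would rewrite the left-hand summand by applying~\eqref{eq.m2jjbl5} with $r=k$: since the factor $\tfrac12\binom{2p}{p}$ there is independent of $k$, the summand equals $\tfrac12\binom{2p}{p}H_{k,p+1/2}$, so the left-hand side is $\tfrac12\binom{2p}{p}\sum_{k=1}^n H_{k,p+1/2}$. Then I would apply~\eqref{eq.m2jjbl5} a second time, now with $r=n$ and $p$ replaced by $p+1$, to express the product of binomial coefficients and the odd-harmonic difference appearing on the right-hand side through $H_{n,p+3/2}$. Collecting the powers of $2$ and using the elementary relation $\binom{2p+2}{p+1}=\tfrac{2(2p+1)}{p+1}\binom{2p}{p}$, the right-hand side becomes exactly $\tfrac12\binom{2p}{p}H_{n,p+3/2}$. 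Hence~\eqref{eq.suzj3to} is equivalent to the single identity $\sum_{k=1}^n H_{k,p+1/2}=H_{n,p+3/2}$.

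To finish I would establish this last relation, that is,~\eqref{eq.uxf1e3m} at the half-integer order $q=p+\tfrac12$. Since the compact form~\eqref{eq.sw1cyzy} makes sense for any real parameter via generalized binomial coefficients, it suffices to verify the column recurrence $H_{k,q+1}-H_{k-1,q+1}=H_{k,q}$: expanding $\binom{k+q}{k}$ by Pascal's rule and using $H_{k+q}-H_{k+q-1}=\tfrac1{k+q}$, $H_q-H_{q-1}=\tfrac1q$ together with $\binom{k+q-1}{k-1}=\tfrac{k}{q}\binom{k+q-1}{k}$ reduces it to an identity of rational functions of $q$. Because $H_{0,q+1}=0$ by~\eqref{eq.sw1cyzy} with $n=0$, telescoping then yields $\sum_{k=1}^n H_{k,q}=H_{n,q+1}$, and specializing $q=p+\tfrac12$ completes the argument.

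Everything above is substitution and simplification; the only step that is not purely formal is the extension of~\eqref{eq.uxf1e3m} beyond integer order, so I expect that column-recurrence verification to be the main (really just delicate) point. Alternatively one could bypass~\eqref{eq.uxf1e3m} entirely and telescope the closed form~\eqref{eq.m2jjbl5} directly, but this amounts to the same Pascal-rule computation.
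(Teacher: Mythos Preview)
Your proposal is correct and matches the paper's proof almost exactly: the paper simply substitutes $p+3/2$ for $p$ in~\eqref{eq.uxf1e3m} and then invokes~\eqref{eq.m2jjbl5}, which is precisely your reduction to $\sum_{k=1}^n H_{k,p+1/2}=H_{n,p+3/2}$. The one point where you go further is in justifying the half-integer instance of~\eqref{eq.uxf1e3m} via the column recurrence and Pascal's rule; the paper treats this substitution as self-evident, so your extra care there is a welcome addition rather than a deviation.
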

In particular,
\begin{equation}\label{eq.oklok93}
\sum_{k = 1}^n {\frac{{O_k }}{{2^{2k} }}\binom{{2k}}{k}}  = \frac{{n + 1}}{{2^{2n + 1} }}\binom{{2(n + 1)}}{{n + 1}}\left( {O_{n + 1}  - 1} \right).
\end{equation}

\begin{proof}
Write $p + 1 + 1/2$ for $p$ in~\eqref{eq.uxf1e3m} to obtain
\begin{equation*}
\sum_{k = 1}^n {H_{k,p + 1/2} }  = H_{n,(p + 1) + 1/2},
\end{equation*}
and use~\eqref{eq.m2jjbl5}.
\end{proof}

\begin{theorem}\label{odd_id1}
For all $n,m\geq 0$ we have
\begin{equation}\label{eq.unrz3hp}
\sum_{k=0}^n \binom{2k}{k} \frac{O_k H_{n-k}(m)}{4^k} = \frac{1}{2} \sum_{k=0}^n \binom{2k}{k} \frac{H_{n-k}(m+1)}{4^k}. 
\end{equation}
In particular,
\begin{equation}\label{eq.zfc0q8z}
\sum_{k=0}^n \binom{2k}{k} \frac{O_k}{4^k} = \frac{1}{2} \sum_{k=0}^n \binom{2k}{k} \frac{H_{n-k}}{4^k}
\end{equation}
and
\begin{equation}
\sum_{k=0}^n \binom{2k}{k} \frac{O_k H_{n-k}}{4^k} = \frac{1}{2} \sum_{k=0}^n \binom{2k}{k} \frac{1}{4^k} \left ( H_{n-k}^2 - H_{n-k}^{(2)} \right ). 
\end{equation}
\end{theorem}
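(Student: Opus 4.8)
The plan is to repeat, almost verbatim, the generating-function argument used in the proof of Theorem~\ref{hyphar}, with the sequence $\binom{2k}{k}O_k/4^k$ now playing the role of the hyperharmonic numbers. The only ingredient not yet available in the paper is the generating function
\begin{equation*}
\sum_{k=0}^\infty \binom{2k}{k}\frac{O_k}{4^k}\,z^k = \frac{-\ln(1-z)}{2\sqrt{1-z}},
\end{equation*}
so I would establish this first. Set $g(z)=(1-z)^{-1/2}=\sum_{k\ge 0}\binom{2k}{k}4^{-k}z^k$, which satisfies $2(1-z)g'(z)=g(z)$, and put $f(z)=-\frac12 g(z)\ln(1-z)$. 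Differentiating and using the relation for $g$ gives the first-order equation $2(1-z)f'(z)=f(z)+g(z)$. Writing $f(z)=\sum_{k\ge0}a_kz^k$ and $b_k=\binom{2k}{k}4^{-k}$, this equation is equivalent to the initial condition $a_0=0$ together with the recurrence $2(k+1)a_{k+1}=(2k+1)a_k+b_k$. Since $2(k+1)b_{k+1}=(2k+1)b_k$ and $O_{k+1}-O_k=\frac1{2k+1}$, the choice $a_k=b_kO_k$ satisfies both, which proves the claimed generating function.

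With this in hand, the proof of \eqref{eq.unrz3hp} is the same Cauchy-product manipulation as in Theorem~\ref{hyphar}. Using \eqref{h_like_gf},
\begin{align*}
\left(\sum_{k=0}^\infty \binom{2k}{k}\frac{O_k}{4^k}z^k\right)H(z)
&= \frac{-\ln(1-z)}{2\sqrt{1-z}}\cdot\frac{(-\ln(1-z))^m}{1-z}\\
&= \frac{(-\ln(1-z))^{m+1}}{2(1-z)^{3/2}}
= \frac12\cdot\frac{1}{\sqrt{1-z}}\cdot\frac{(-\ln(1-z))^{m+1}}{1-z}\\
&= \frac12\left(\sum_{k=0}^\infty \binom{2k}{k}\frac{z^k}{4^k}\right)\left(\sum_{n=0}^\infty H_n(m+1)z^n\right).
\end{align*}
Expanding the two outer products as Cauchy products and comparing the coefficient of $z^n$ yields \eqref{eq.unrz3hp}. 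The two stated special cases then follow immediately by taking $m=0$ and $m=1$ and recalling $H_n(1)=H_n$ and $H_n(2)=H_n^2-H_n^{(2)}$.

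The only genuinely new step is the closed form for $\sum_k\binom{2k}{k}O_k4^{-k}z^k$, and I expect the short differential-equation/recurrence verification there to be the main (and essentially routine) obstacle; everything downstream is the same formal computation as in Theorem~\ref{hyphar}, with $(1-z)^{-1/2}$ taking the place that $(1-z)^{-(p+1)}$ had there.
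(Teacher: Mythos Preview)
Your proof is correct and follows essentially the same generating-function/Cauchy-product argument as the paper; the only cosmetic differences are that the paper works with $H(4z)$ and cites the closed form $\sum_{n\ge0}\binom{2n}{n}O_nz^n=\tfrac{-\ln(1-4z)}{2\sqrt{1-4z}}$ from the literature, whereas you absorb the factor $4^{-k}$ into the variable and supply a short self-contained recurrence verification of the equivalent identity. The downstream manipulation is identical.
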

\begin{proof}
We use the fact that \cite{HChen,Gould}
\begin{equation*}
\sum_{n=0}^\infty \binom{2n}{n} z^n = \frac{1}{\sqrt{1-4z}}.
\end{equation*}
It is also known that \cite{HChen}
\begin{equation*}
O(z) = \sum_{n=0}^\infty \binom{2n}{n} O_n z^n = \frac{1}{2} \sqrt{1-4z} \left (\frac{-\ln (1-4z)}{1-4z} \right ).
\end{equation*}
This yields
\begin{align*}
O(z)\cdot H(4z) &= \sum_{n=0}^\infty \sum_{k=0}^n \binom{2k}{k} O_k 4^{n-k} H_{n-k}(m) z^n \\
&= \frac{1}{2 \sqrt{1-4z}}\, \frac{(-\ln (1-4z))^{m+1}}{1-4z} \\
&= \frac{1}{2} \left (\sum_{n=0}^\infty \binom{2n}{n} z^n \right ) \left ( \sum_{n=0}^\infty H_n(m+1) 4^n z^n \right ) \\
&= \frac{1}{2} \sum_{n=0}^\infty \sum_{k=0}^n \binom{2k}{k} 4^{n-k} H_{n-k}(m+1) z^n 
\end{align*}
and the proof is completed.
\end{proof}
Note that from~\eqref{eq.oklok93} and~\eqref{eq.zfc0q8z}, we also have
\begin{equation}\label{eq.tb6ik5l}
\sum_{k=0}^n \binom{2k}{k} \frac{H_{n-k}}{2^{2k}}=\frac{{n + 1}}{{2^{2n} }}\binom{{2(n + 1)}}{{n + 1}}\left( {O_{n + 1}  - 1} \right).
\end{equation}

The next theorem, based on Theorem~\ref{hyphar}, generalizes Theorem~\ref{odd_id1}.
\begin{theorem}
If $n$ and $p$ are non-negative integers, then
\begin{equation}
\begin{split}
&\sum_{k = 0}^n {\frac{1}{{2^{2k} }}\binom{{2(k + p)}}{{k + p}}\binom{{k + p}}{k}H_{n - k} \left( m \right)\left( {O_{k + p}  - O_p } \right)}\\
&\qquad  = \frac{1}{2}\sum_{k = 0}^n {\frac{1}{{2^{2k} }}\binom{{2(k + p)}}{{k + p}}\binom{{k + p}}{k}H_{n - k} \left( {m + 1} \right)} .
\end{split}
\end{equation}
\end{theorem}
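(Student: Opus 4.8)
The plan is to derive the identity as the half-integer specialization $p\mapsto p-\tfrac12$ of Theorem~\ref{hyphar}. The first observation is that the proof of Theorem~\ref{hyphar} is purely formal: it uses only \eqref{h_like_gf}, the binomial series $\sum_{n\ge0}\binom{n+p}{n}z^n=(1-z)^{-(p+1)}$, and the hyperharmonic generating function $\sum_{n\ge0}H_{n,p+1}z^n=-\ln(1-z)/(1-z)^{p+1}$, none of which requires $p$ to be an integer once we agree to \emph{define} $H_{n,p+1/2}$ through the half-integer instance of \eqref{eq.sw1cyzy}, namely $H_{n,p+1/2}=\binom{n+p-1/2}{n}\left(H_{n+p-1/2}-H_{p-1/2}\right)$. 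Its generating function is still $-\ln(1-z)/(1-z)^{p+1/2}$; this can be checked by differentiating the binomial series $\sum_{n\ge0}\binom{n+\alpha-1}{n}z^n=(1-z)^{-\alpha}$ with respect to $\alpha$ and setting $\alpha=p+\tfrac12$. With this in hand, \eqref{eq.dk6gvdp} holds verbatim with $p$ replaced by $p-\tfrac12$:
\[
\sum_{k=0}^n\binom{k+p-1/2}{k}H_{n-k}(m)\left(H_{k+p-1/2}-H_{p-1/2}\right)=\sum_{k=0}^n\binom{k+p-1/2}{k}H_{n-k}(m+1).
\]

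Next I would convert the two pieces of this identity into the form in the statement. Subtracting two instances of \eqref{eq.plh634k} (legitimate since $k+p$ and $p$ are non-negative integers) gives $H_{k+p-1/2}-H_{p-1/2}=2\left(O_{k+p}-O_p\right)$, while \eqref{eq.j80dqx2} with $r=k$ gives $\binom{k+p-1/2}{k}=2^{-2k}\binom{2p}{p}^{-1}\binom{2(k+p)}{k+p}\binom{k+p}{k}$. Substituting both into the displayed identity: on the left-hand side a factor $2$ is produced, the common factor $\binom{2p}{p}^{-1}$ occurs on both sides and cancels, and dividing through by $2$ leaves exactly the claimed identity. One should be mildly careful about index bookkeeping, keeping the central-binomial weight attached to the running index $k$ and the factor $H_{n-k}(m)$ attached to $n-k$; this is automatic if the underlying Cauchy product is written in that order.

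The computation in the second paragraph is routine, so the only genuine obstacle is the legitimacy of the half-integer specialization in the first paragraph, i.e.\ verifying that $-\ln(1-z)/(1-z)^{p+1/2}$ really is the generating function of the numbers $H_{n,p+1/2}$ as defined above (equivalently, that nothing in the proof of Theorem~\ref{hyphar} used $p\in\mathbb{Z}$). If one prefers to sidestep the analytic-continuation language entirely, the same result is reached by mimicking the proof of Theorem~\ref{odd_id1}: form the Cauchy product of $\sum_n H_n(m)z^n$ with $-\ln(1-z)/(1-z)^{p+1/2}$, note it equals $(1-z)^{-(p+1/2)}\cdot(-\ln(1-z))^{m+1}/(1-z)$, expand both factors via \eqref{h_like_gf} and the binomial series, read off the coefficient of $z^n$, and finish with \eqref{eq.m2jjbl5} and \eqref{eq.j80dqx2} exactly as before.
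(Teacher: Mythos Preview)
Your proposal is correct and follows essentially the same route as the paper: substitute $p\mapsto p-\tfrac12$ in \eqref{eq.dk6gvdp} and convert via \eqref{eq.tb96ki2} (your subtraction of two instances of \eqref{eq.plh634k} is exactly this) and \eqref{eq.j80dqx2}. The paper's one-line proof does not pause to justify the half-integer specialization, so your discussion of why Theorem~\ref{hyphar} extends to non-integer $p$ (and your alternative via the generating-function product as in Theorem~\ref{odd_id1}) is a welcome addition rather than a divergence.
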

In particular, setting $m=0$ and using~\eqref{eq.suzj3to} gives the following generalization of~\eqref{eq.tb6ik5l}:
\begin{equation}
\begin{split}
&\sum_{k = 0}^n {\frac{1}{{2^{2k} }}\binom{{2(k + p)}}{{k + p}}\binom{{k + p}}{k}H_{n - k}}\\
&\qquad=\frac{1}{{2^{2n} }}\,\frac{{p + 1}}{{2p + 1}}\binom{{2\left( {n + p + 1} \right)}}{{n + p + 1}}\binom{{n + p + 1}}{n}\left( {O_{n + p + 1}  - O_{p + 1} } \right).
\end{split}
\end{equation}

\begin{proof}
Write $p-1/2$ for $p$ in~\eqref{eq.dk6gvdp} and use~\eqref{eq.tb96ki2} and~\eqref{eq.j80dqx2}.
\end{proof}

\begin{lemma}\label{lem.qrsgpmt}
If $(a_n)_{n\geq 0}$ is a sequence, then
\begin{equation*}
\sum_{k = 1}^n k (a_k - a_{k - 1}) = n a_n - \sum_{k = 1}^n a_{k - 1}.
\end{equation*}
\end{lemma}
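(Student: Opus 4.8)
The plan is to mimic the standard Abel-summation (summation-by-parts) argument that underlies Lemmas~\ref{lem.jjbwp3m} and~\ref{lem.yugnf7k}, now with the ``weight'' $k$ in place of $H_k$ or $\tfrac1k$. First I would write each summand $k(a_k-a_{k-1})$ and regroup the sum by shifting the index in the $a_{k-1}$ part: set $\sum_{k=1}^n k(a_k-a_{k-1}) = \sum_{k=1}^n k\,a_k - \sum_{k=1}^n k\,a_{k-1}$ and in the second sum substitute $j=k-1$ to get $\sum_{j=0}^{n-1}(j+1)a_j$. Subtracting, the $\sum k\,a_k$ contributions telescope down to the single top term $n\,a_n$ (the $j=0$ term of the shifted sum vanishes because it carries the factor $j+1$ but contributes $-1\cdot a_0$, which I will track carefully), and what remains is $-\sum_{j=0}^{n-1} a_j$, i.e. $-\sum_{k=1}^n a_{k-1}$ after renaming. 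Collecting the pieces yields exactly $n a_n - \sum_{k=1}^n a_{k-1}$.

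Concretely, the cleanest bookkeeping is:
\begin{equation*}
\sum_{k=1}^n k(a_k-a_{k-1}) = \sum_{k=1}^n k\,a_k - \sum_{k=1}^n k\,a_{k-1} = \sum_{k=1}^n k\,a_k - \sum_{k=0}^{n-1}(k+1)a_k = n\,a_n - \sum_{k=0}^{n-1} a_k,
\end{equation*}
and since $\sum_{k=0}^{n-1}a_k = \sum_{k=1}^n a_{k-1}$ the claim follows. An equivalent route, if one prefers a genuine telescoping presentation, is to note the identity $k(a_k-a_{k-1}) = \bigl(k a_k - (k-1)a_{k-1}\bigr) - a_{k-1}$ for each $k\ge 1$; summing over $k$ from $1$ to $n$, the bracketed terms telescope to $n a_n - 0\cdot a_0 = n a_n$, leaving $-\sum_{k=1}^n a_{k-1}$.

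There is essentially no obstacle here: the only point requiring care is the index shift and the treatment of the boundary term at $k=0$ (ensuring the $a_0$ term is neither double-counted nor dropped), which the decomposition $k a_k-(k-1)a_{k-1}-a_{k-1}$ makes transparent. I would therefore present the one-line telescoping identity as the whole proof, in the same terse style as the proofs of Lemmas~\ref{lem.jjbwp3m} and~\ref{lem.yugnf7k}.
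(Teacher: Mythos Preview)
Your argument is correct; both the index-shift computation and the telescoping decomposition $k(a_k-a_{k-1}) = (ka_k-(k-1)a_{k-1}) - a_{k-1}$ are valid and yield the claim immediately. The paper in fact gives no proof at all for this lemma (it is stated and immediately followed by the next theorem), so your one-line telescoping presentation is entirely in keeping with the surrounding style.
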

\begin{theorem}
If $n\in\mathbb N_0$ and $p\in\mathbb C\setminus\mathbb Z^{-}$ then 
\begin{equation}\label{eq.xld8bhi}
\sum_{k = 1}^n {kH_{k,p} }  = nH_{n,p + 1}  - H_{n - 1,p + 2}.
\end{equation}
\end{theorem}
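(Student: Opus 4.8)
The plan is to feed the telescoping identity of Lemma~\ref{lem.qrsgpmt} the sequence $a_k = H_{k,p+1}$. With this choice the consecutive difference is, by the defining recurrence~\eqref{eq.uxf1e3m} applied with order $p+1$,
$a_k - a_{k-1} = H_{k,p+1} - H_{k-1,p+1} = H_{k,p}$,
so the left-hand side of Lemma~\ref{lem.qrsgpmt} becomes precisely $\sum_{k=1}^n k H_{k,p}$, the quantity we wish to evaluate. This is the only substantive input; everything else is bookkeeping.

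On the right-hand side, Lemma~\ref{lem.qrsgpmt} returns $n H_{n,p+1} - \sum_{k=1}^n H_{k-1,p+1}$. Re-indexing the sum as $\sum_{j=0}^{n-1} H_{j,p+1}$ and dropping the vanishing term $H_{0,p+1}=0$ (again from~\eqref{eq.uxf1e3m}), we are left with $\sum_{j=1}^{n-1} H_{j,p+1}$, which is $H_{n-1,p+2}$ by one further use of~\eqref{eq.uxf1e3m}, this time with $n$ replaced by $n-1$ and $p$ by $p+2$. Putting the two sides together yields~\eqref{eq.xld8bhi}.

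The only point that deserves a sentence of justification is the passage from integer to complex order: the recurrence~\eqref{eq.uxf1e3m} is stated for non-negative integer orders, whereas the theorem allows $p\in\mathbb C\setminus\mathbb Z^{-}$. This is harmless because, via the closed form~\eqref{eq.sw1cyzy}, each $H_{k,p+1}=\binom{k+p}{k}(H_{k+p}-H_p)$ is in fact a polynomial in $p$ (of degree $k-1$); hence the two auxiliary identities $H_{k,p+1}-H_{k-1,p+1}=H_{k,p}$ and $\sum_{j=1}^{n-1}H_{j,p+1}=H_{n-1,p+2}$ are polynomial identities in $p$, and having been checked for all non-negative integers $p$ they hold for every complex $p$. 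So there is no real obstacle here; the proof is a two-line application of Lemma~\ref{lem.qrsgpmt}, and the only thing one must be careful about is keeping the order shifts straight ($p\mapsto p+1$ in the telescoped difference, $p\mapsto p+2$ in the final summation).
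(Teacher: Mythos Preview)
Your proof is correct and follows essentially the same route as the paper: set $a_k=H_{k,p+1}$ in Lemma~\ref{lem.qrsgpmt} and use the recurrence $H_{k,p+1}-H_{k-1,p+1}=H_{k,p}$. The paper cites~\eqref{eq.sw1cyzy} for that recurrence (which already makes sense for complex $p$), whereas you cite~\eqref{eq.uxf1e3m} and then supply an explicit polynomial-identity argument for the extension to $p\in\mathbb C\setminus\mathbb Z^{-}$; this extra care is fine and the degree-$(k-1)$ observation is correct.
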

\begin{proof}
Identity~\eqref{eq.sw1cyzy} gives the following recurrence relation:
\begin{equation}\label{eq.unfun08}
H_{k,p + 1}  - H_{k - 1,p + 1}  = H_{k,p}.
\end{equation}
Use $a_k = H_{k,p + 1}$ in Lemma~\ref{lem.qrsgpmt}, keeping~\eqref{eq.unfun08} in mind.
\end{proof}

\begin{theorem}
If $n$ and $p$ are non-negative integers, then
\begin{equation}
\begin{split}
&\sum_{k = 1}^n {\frac{k}{{2^{2k} }}\binom{{2(k + p)}}{{(k + p)}}\binom{{k + p}}{k}\left( {O_{k + p}  - O_p } \right)} \\
&\qquad = \frac{n}{{2^{2n} }}\binom{{2(p + 1)}}{{p + 1}}^{ - 1} \binom{{2p}}{p}\binom{{2(n + p + 1)}}{{n + p + 1}}\binom{{n + p + 1}}{n}\left( {O_{n + p + 1}  - O_{p + 1} } \right)\\
&\qquad\qquad - \frac{1}{{2^{2n - 2} }}\binom{{2(p + 2)}}{{p + 2}}^{ - 1} \binom{{2p}}{p}\binom{{2(n + p + 1)}}{{n + p + 1}}\binom{{n + p + 1}}{{n - 1}}\left( {O_{n + p + 1}  - O_{p + 2} } \right).
\end{split}
\end{equation}
\end{theorem}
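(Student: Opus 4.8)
The plan is to recognize the claim as the half-integer specialization of the preceding identity \eqref{eq.xld8bhi}. Since $p+\tfrac12\in\mathbb{C}\setminus\mathbb{Z}^{-}$ for every non-negative integer $p$, I would first replace $p$ by $p+\tfrac12$ in \eqref{eq.xld8bhi} to get
\begin{equation*}
\sum_{k=1}^n k\,H_{k,p+1/2}=n\,H_{n,p+3/2}-H_{n-1,p+5/2},
\end{equation*}
and then write $p+\tfrac32=(p+1)+\tfrac12$ and $p+\tfrac52=(p+2)+\tfrac12$ so that every hyperharmonic number occurring has a half-integer lower parameter of the form handled by \eqref{eq.m2jjbl5}.

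Next I would apply \eqref{eq.m2jjbl5} with $r=k$ (and parameter $p$) to each summand on the left; since $\tfrac{1}{2^{2k-1}}=\tfrac{2}{2^{2k}}$, this gives
\begin{equation*}
\sum_{k=1}^n \frac{k}{2^{2k}}\binom{2(k+p)}{k+p}\binom{k+p}{k}(O_{k+p}-O_p)=\frac{1}{2}\binom{2p}{p}\sum_{k=1}^n k\,H_{k,p+1/2},
\end{equation*}
so the left-hand side of the theorem equals $\tfrac12\binom{2p}{p}\bigl(n\,H_{n,p+3/2}-H_{n-1,p+5/2}\bigr)$. Then I would invoke \eqref{eq.m2jjbl5} twice more, with $(r,\text{parameter})=(n,p+1)$ and with $(r,\text{parameter})=(n-1,p+2)$, using the index simplifications $n+(p+1)=n+p+1$ and $(n-1)+(p+2)=n+p+1$, to express $H_{n,p+3/2}$ and $H_{n-1,p+5/2}$ in binomial/odd-harmonic form. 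Substituting these and combining the $\tfrac12\binom{2p}{p}$ prefactor with the denominators $2^{2n-1}$ and $2^{2n-3}$ produced by \eqref{eq.m2jjbl5} yields exactly the $2^{2n}$ and $2^{2n-2}$ appearing in the statement, together with the claimed products of binomial coefficients.

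The main obstacle is purely bookkeeping: tracking the powers of $2$ and the factor $\binom{2p}{p}$ through the three applications of \eqref{eq.m2jjbl5}, and handling the degenerate case $n=0$, where the left side is an empty sum while on the right the first term vanishes because of the explicit factor $n$ and the second vanishes because $\binom{n+p+1}{n-1}=\binom{p+1}{-1}=0$. Once the substitutions are in place the identity is formal. As with the companion results above, I would also record the particular case $p=0$ (using $\binom{0}{0}^{-1}=1$ and $O_0=0$) to extract a cleaner identity for the central binomial coefficients.
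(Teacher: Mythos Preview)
Your proposal is correct and follows exactly the paper's approach: substitute $p+\tfrac12$ for $p$ in~\eqref{eq.xld8bhi} and then apply~\eqref{eq.m2jjbl5} to each of the three hyperharmonic numbers, with the powers of $2$ and the $\binom{2p}{p}$ factor combining as you describe. Your treatment of the $n=0$ boundary case and the $p=0$ specialization are also in line with the paper.
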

In particular,
\begin{equation}
\begin{split}
\sum_{k = 1}^n {\frac{k}{{2^{2k} }}\binom{{2k}}{k}O_k }  &= \frac{{n(n + 1)}}{{2^{2n + 1} }}\binom{{2(n + 1)}}{{n + 1}}\left( {O_{n + 1}  - 1} \right)\\
&\qquad - \frac{{n(n + 1)}}{{2^{2n} 3}}\binom{{2(n + 1)}}{{n + 1}}\left( {O_{n + 1}  - \frac{4}{3}} \right).
\end{split}
\end{equation}
\begin{proof}
Write $p+1/2$ for $p$ in~\eqref{eq.xld8bhi} and use~\eqref{eq.m2jjbl5}.
\end{proof}

\begin{theorem}
If $n$ and $p$ are non-negative integers, then
\begin{equation}\label{eq.yycg1tg}
\begin{split}
&\sum_{k = 1}^n {\binom{{2k}}{k}^{ - 1} \binom{{2(k + p)}}{{k + p}}\binom{{k + p}}{k}\left( {O_{k + p}  - O_k } \right)} \\
&\qquad = \frac{1}{4}\binom{{2n}}{n}^{ - 1} \binom{{2\left( {n + p + 1} \right)}}{{n + p + 1}}\binom{n + p + 1}n\left( {O_{n + p + 1}  - O_n } \right) - \frac{1}{4}\binom{{2\left( {p + 1} \right)}}{{p + 1}}O_{p + 1} .
\end{split}
\end{equation}
\end{theorem}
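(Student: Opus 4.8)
The plan is to recognise~\eqref{eq.yycg1tg} as a telescoped hyperharmonic identity in disguise, the disguise being~\eqref{eq.m2jjbl5}, and to undo it. First I would run~\eqref{eq.m2jjbl5} backwards on every ingredient. Using it with $r=p$ and with its internal parameter ``$p$'' renamed to $k$ (and $\binom{k+p}{k}=\binom{k+p}{p}$) identifies the general summand on the left of~\eqref{eq.yycg1tg} as $2^{2p-1}H_{p,\,k+1/2}$; using it with $r=p+1$ and that parameter renamed first to $n$ and then to $0$ identifies the two terms on the right as $2^{2p-1}H_{p+1,\,n+1/2}$ and $2^{2p-1}H_{p+1,\,1/2}$ (the factor $\tfrac14$ in~\eqref{eq.yycg1tg} absorbs two of the powers of two in the denominator $2^{2(p+1)-1}$ of~\eqref{eq.m2jjbl5}, leaving $2^{2p-1}$). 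Cancelling the common $2^{2p-1}$, the theorem becomes the clean claim
\begin{equation*}
\sum_{k=1}^n H_{p,\,k+1/2}=H_{p+1,\,n+1/2}-H_{p+1,\,1/2}.
\end{equation*}

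This I would prove by telescoping in the \emph{order} variable. In addition to the index recurrence~\eqref{eq.unfun08}, the compact form~\eqref{eq.sw1cyzy} yields the companion recurrence
\begin{equation*}
H_{p+1,\,q}-H_{p+1,\,q-1}=H_{p,\,q},
\end{equation*}
valid for arbitrary order $q$, and in particular for $q=k+\tfrac12$; summing over $k=1,\dots,n$ collapses the left side to $H_{p+1,\,n+1/2}-H_{p+1,\,1/2}$, which is the right side. Pushing this back through the first step gives~\eqref{eq.yycg1tg}.

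The one point needing care is the companion recurrence for half-integer $q$. I would extract it from~\eqref{eq.sw1cyzy}: writing $H_{p+1,q}=\binom{p+q}{p+1}(H_{p+q}-H_{q-1})$, $H_{p+1,q-1}=\binom{p+q-1}{p+1}(H_{p+q-1}-H_{q-2})$, and $H_{p,q}=\binom{p+q-1}{p}(H_{p+q-1}-H_{q-1})$, then applying Pascal's rule $\binom{p+q}{p+1}=\binom{p+q-1}{p+1}+\binom{p+q-1}{p}$ together with $H_{p+q}-H_{p+q-1}=\tfrac1{p+q}$ and $H_{q-1}-H_{q-2}=\tfrac1{q-1}$, the claimed recurrence reduces to the single cancellation
\begin{equation*}
\binom{p+q-1}{p+1}\left(\frac1{p+q}-\frac1{q-1}\right)+\binom{p+q-1}{p}\,\frac1{p+q}=0,
\end{equation*}
which holds since $\binom{p+q-1}{p+1}\cdot\tfrac{p+1}{q-1}=\binom{p+q-1}{p}$, and none of these steps uses integrality of $q$. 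Everything else is bookkeeping. As a check, and an alternative route bypassing hyperharmonic numbers, one can verify directly that with $c_k:=\tfrac14\binom{2k}{k}^{-1}\binom{2(k+p+1)}{k+p+1}\binom{k+p+1}{k}(O_{k+p+1}-O_k)$ one has $c_k-c_{k-1}$ equal to the summand on the left of~\eqref{eq.yycg1tg}, so that the sum telescopes to $c_n-c_0$ with $c_0=\tfrac14\binom{2(p+1)}{p+1}O_{p+1}$ --- the same computation in different clothing.
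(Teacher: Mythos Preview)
Your proposal is correct and follows essentially the same route as the paper. Both arguments reduce~\eqref{eq.yycg1tg} via~\eqref{eq.m2jjbl5} to the telescoping identity $\sum_{k=1}^n H_{p,\,k+1/2}=H_{p+1,\,n+1/2}-H_{p+1,\,1/2}$, and both justify the needed recurrence $H_{p,q}=H_{p+1,q}-H_{p+1,q-1}$ at half-integer order from the compact form~\eqref{eq.sw1cyzy}; the only difference is that you verify this recurrence directly (and supply the algebraic details), whereas the paper obtains it by relabelling~\eqref{eq.unfun08}.
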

\begin{proof}
Rearrange~\eqref{eq.unfun08}, interchange $k$ and $p$ and write $k + 1/2$ for $k$ to obtain
\begin{equation*}
H_{p,k + 1/2}  = H_{p + 1,k + 1/2}  - H_{p + 1,k - 1/2}
\end{equation*}
which telescopes to give
\begin{equation*}
\sum_{k = 1}^n {H_{p,k + 1/2} }  = H_{p + 1,n + 1/2}  - H_{p + 1,1/2},
\end{equation*}
from which~\eqref{eq.yycg1tg} follows, in view of~\eqref{eq.m2jjbl5}.
\end{proof}

\end{document}